\documentclass[12pt,twoside,leqno]{article}
\usepackage[T1]{fontenc}
\usepackage{amsfonts}
\usepackage{amsmath,amsthm}
\usepackage{amssymb,latexsym}
\usepackage{enumerate}

\theoremstyle{plain}
\newtheorem{theorem}{Theorem}[section]
\newtheorem{lemma}[theorem]{Lemma}
\newtheorem{proposition}[theorem]{Proposition}
\newtheorem{corollary}[theorem]{Corollary}

\theoremstyle{definition}
\newtheorem{example}[theorem]{Example}
\newtheorem{definition}[theorem]{Definition}
\theoremstyle{remark}

\DeclareMathOperator{\cl}{{\rm cl}}

\begin{document}
\title{On Urysohn's Lemma for generalized topological spaces in $\mathbf{ZF}$}
\author{Jacek Hejduk and Eliza Wajch\\
Faculty of Mathematics and Computer Science,\\
\L\'od\'z University, Banacha 22, 90-238 \L\'od\'z, Poland.\\
jacek.hejduk@wmii.uni.lodz.pl.\\
Institute of Mathematics\\
Faculty of Exact and Natural Sciences \\
Siedlce University of Natural Sciences and Humanities\\
ul. 3 Maja 54, 08-110 Siedlce, Poland.\\
eliza.wajch@wp.pl}
\maketitle
\begin{abstract}
\medskip
A strong generalized topological space is an ordered pair $\mathbf{X}=\langle X, \mathcal{T}\rangle$ such that $X$ is a set and $\mathcal{T}$ is a collection of subsets of $X$ such that $\emptyset, X\in \mathcal{T}$ and $\mathcal{T}$ is stable under arbitrary unions. A necessary and sufficient condition for a strong generalized topological space $\mathbf{X}$ to satisfy Urysohn's lemma or its appropriate variant is shown in $\mathbf{ZF}$. Notions of a U-normal and an effectively normal generalized topological space are introduced.  It is observed that, in $\mathbf{ZF}+\mathbf{DC}$, every U-normal generalized topological space satisfies Urysohn's lemma. It is shown that every effectively normal generalized topological space satisfies Csasz\'ar's modification of Urysohn's Lemma.  A $\mathbf{ZF}$- example of a strong generalized topological normal space which satisfies the Tietze-Urysohn Extension Theorem and fails to satisfy Urysohn's Lemma is shown. 

\noindent\textit{Mathematics Subject Classification (2010)}:  54A35, 03E35, 03E25, 54A05, 54C30, 54D15  \newline 
\textit{Keywords}: Generalized topology, Urysohn's Lemma, effectively normal space, $\mathbf{ZF}$
\end{abstract}

\section{Introduction}
\label{s1}

The set-theoretic framework for this paper is the Zermelo-Fraenkel system of axioms $\mathbf{ZF}$, so no form of the Axiom of Choice ($\mathbf{AC}$) is assumed. The system $\mathbf{ZF+AC}$ is denoted by $\mathbf{ZFC}$.  The set of all Dedekind-finite ordinal numbers of von Neumann is denoted by $\omega$. Then $0=\emptyset$ and, for every $n\in\omega$, $n+1=n\cup\{n\}$. We put $\mathbb{N}=\omega\setminus\{0\}$. If $X$ is a set, then $[X]^{<\omega}$ stands for the set of all finite subsets of $X$. The power set of $X$ is denoted by $\mathcal{P}(X)$. For sets $X$ and $Y$, the set of all mappings from $X$ to $Y$ is denoted by $Y^X$.

Generalized topological spaces in the style of \cite{cs0} have been studied by many mathematicians. As to our knowledge, generalizations of the classical concept of a topology, such that it is not assumed that finite intersections of open sets are open were considered already in \cite{af}. It has been shown, for instance, in \cite{hl1} and \cite{hl2} recently that generalized topologies that are not topologies can appear in a very natural way in some mathematical problems. Needless to say, the set-theoretic strength of Urysohn's Lemma and the Tietze-Urysohn Extension Theorem for topological spaces is very important (see, e.g., \cite[Forms 78 and 375]{hr}, \cite{hkrr} and \cite{kw}).  In \cite{cs1}, a modification of Urysohn's Lemma for normal generalized topological spaces was obtained.  However, it has been done very little about possible modifications of Urysohn's Lemma and the Tietze-Urysohn Extension Theorem  for generalized topological spaces in $\mathbf{ZF}$. In this article, we introduce and investigate in the absence of the axiom of choice several new concepts relevant to normality, Urysohn's Lemma and the Tietze-Urysohn Extension Theorem for generalized topological spaces in the sense of \cite{cs0}.

 Before we pass to the body of the paper, let us recall several basic definitions and establish notation concerning mainly generalized topologies in the sense of \cite{cs0}.
\begin{definition}
\label{s1d1}
(Cf. \cite{cs0}.)
\begin{enumerate}
\item A \emph{generalized topology} in a set $X$ is a collection $\mu$ of subsets of $X$ such that, for every family $\mathcal{U}\subseteq \mu$, $\bigcup\mathcal{U}\in\mu$. That $\mu$ is a generalized topology in $X$ is abbreviated to: $\mu$ is a GT in $X$.
\item A generalized topology $\mu$ in $X$ is called \emph{strong} if $X\in\mu$.
\item A (\emph{strong}) \emph{generalized topological space}  (in abbreviation: a (strong) GT space) is an ordered pair $\mathbf{X}=\langle X, \mu\rangle$ where $X$ is a set and $\mu$ is a (strong) generalized topology in $X$. 
\end{enumerate}
\end{definition}
If $\mu$ is a generalized topology in $X$, then $\emptyset\in\mu$ because $\emptyset$ is the union of an empty subfamily of $\mu$.
\begin{definition}
\label{s1d2}
Let $\mathbf{X}=\langle X, \mu\rangle$ be a GT space and let $A\subseteq X$.
\begin{enumerate}
\item  The set $A$ is called $\mu$-\emph{open} (respectively, $\mu$-\emph{closed}) if $A\in\mu$ (respectively, $X\setminus A\in\mu$).
\item $\cl_{\mu}(A)$ denotes the intersection of all $\mu$-closed sets containing $A$; that is, $\cl_{\mu}(A)$ is the closure of $A$ in $\mathbf{X}$. Sometimes, we denote $\cl_{\mu}(A)$ by $\cl_{\mathbf{X}}(A)$ or by $\cl_{X}(A)$.
\item For $\mu|_A=\{U\cap A: U\in\mu\}$,  the GT space  $\mathbf{A}=\langle A, \mu|_A\rangle$ is called the \emph{subspace} of $\mathbf{X}$ with the underlying set $A$.
\end{enumerate}
\end{definition}

\begin{definition}
\label{s1d3} 
(Cf. \cite{cs0}.)
Let $\mathbf{X}=\langle X, \mu_X\rangle$ and $\mathbf{Y}=\langle Y, \mu_Y\rangle$ be GT spaces. A mapping $f: X\to Y$ is called:
\begin{enumerate}
 \item $\langle \mu_X, \mu_Y\rangle$-continuous if, for every $V\in\mu_Y$, $f^{-1}[V]\in\mu_X$;
 \item $\langle \mu_X, \mu_Y\rangle$-continuous at a point $x\in X$ if, for every $V\in\mu_Y$ with $f(x)\in V$, there is $U\in\mu_X$ such that $x\in U$ and $f[U]\subseteq V$.
 \end{enumerate}
 \end{definition}

Since definitions of $T_0$, $T_1$, $T_2$, $T_3$ and normal GT spaces are known and the same as analogous definitions for topological spaces, let us not write them down here. We recommend  \cite{en} and \cite{w} as basic textbooks on topological spaces. 

\begin{definition}
\label{s1d4}
\begin{enumerate} 
\item $\tau_n$ denotes the natural topology in $\mathbb{R}$ having the family of all open intervals with rational end-points as a base.
\item For the generalized topology $g\tau_n=\{\emptyset, \mathbb{R}\}\cup\{(-\infty,a): a\in\mathbb{R}\}\cup\{(a, +\infty): a\in\mathbb{R}\}\cup\{(-\infty, a)\cup(b,+\infty): a,b\in\mathbb{R}\text{ and } a<b\}$ in $\mathbb{R}$, $\mathbf{R}=\langle \mathbb{R},g\tau_n\rangle$.
\end{enumerate}  
\end{definition}

In \cite{cs1}, the space $\mathbf{R}$ was used in a version of Urysohn's lemma for GT spaces.

Let us introduce the following new concepts for GT spaces:

\begin{definition}
\label{s1d6}
Let $\mathbf{X}=\langle X, \mu\rangle$ be a GT space.
\begin{enumerate}
\item  $\mathbf{UL(X)}$ is the statement: For every pair $A,B$ of disjoint $\mu$-closed sets, there exists a $\langle\mu, \tau_n\rangle$-continuous function $f: X\to\mathbb{R}$ such that $A\subseteq f^{-1}[\{0\}]$ and $B\subseteq f^{-1}[\{1\}]$. If $\mathbf{UL(X)}$ is true, we say that $\mathbf{X}$ satisfies Urysohn's Lemma.
\item $\mathbf{GUL(X)}$ is the statement: For every pair $A,B$ of disjoint $\mu$-closed sets, there exists a $\langle\mu, g\tau_n\rangle$-continuous function $f: X\to\mathbb{R}$ such that $A\subseteq f^{-1}[\{0\}]$ and $B\subseteq f^{-1}[\{1\}]$.
\item $\mathbf{TET(X)}$ is the statement: For any $\mu$- closed set $A$ and any $\langle \mu|_A, \tau_n\rangle$-continuous function $f: A\to\mathbb{R}$, there exists a $\langle \mu, \tau_n\rangle$-continuous function $\tilde{f}:X\to\mathbb{R}$ such that, for every $x\in A$,  $\tilde{f}(x)=f(x)$. If $\mathbf{TET(X)}$ is true, we say that $\mathbf{X}$ satisfies the Tietze-Urysohn Extension Theorem.
\item $\mathbf{GTET(X)}$ is the statement: For any $\mu$-closed set $A$ and any $\langle \mu|_A, g\tau_n\rangle$-continuous function $f: A\to\mathbb{R}$, there exists a $\langle \mu, g\tau_n\rangle$-continuous function $\tilde{f}:X\to\mathbb{R}$ such that, for every $x\in A$,  $\tilde{f}(x)=f(x)$.
\end{enumerate}
\end{definition}

The following proposition is straightforward:

\begin{proposition}
\label{s1p7}
$[\mathbf{ZF}]$ Let $\mathbf{X}=\langle X, \mu\rangle$ be a GT space. 
\begin{enumerate}
\item $\mathbf{UL(X)}$ implies $\mathbf{GUL(X)}$.
\item If $\mathbf{X}$ is a topological space, then $\mathbf{UL(X)}$ and $\mathbf{GUL(X)}$ are equivalent, $\mathbf{TET(X)}$ and $\mathbf{GTET(X)}$ are equivalent, and $\mathbf{TET(X)}$ implies $\mathbf{UL(X)}$.
\item (Cf. \cite[Theorem 3.4]{cs1}.) If $\mathbf{GUL(X)}$ holds, then $\mathbf{X}$ is normal. 
\end{enumerate}
\end{proposition}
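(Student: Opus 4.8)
My plan is to reduce all three items to two elementary facts about the generalized topology $g\tau_n$ on $\mathbb{R}$, after which each statement falls out with essentially no computation.

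First I would record that $g\tau_n\subseteq\tau_n$: the rays $(-\infty,a)$ and $(a,+\infty)$ are unions of open intervals with rational end-points, a set $(-\infty,a)\cup(b,+\infty)$ is a union of two such rays, and $\emptyset,\mathbb{R}\in\tau_n$; hence every member of $g\tau_n$ is $\tau_n$-open. Consequently any $\langle\mu,\tau_n\rangle$-continuous $f\colon X\to\mathbb{R}$ is $\langle\mu,g\tau_n\rangle$-continuous, since $f^{-1}[V]\in\mu$ for each $V\in g\tau_n\subseteq\tau_n$. This is exactly part (1): the same $f$ witnessing $\mathbf{UL(X)}$ witnesses $\mathbf{GUL(X)}$. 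The same remark, applied also to subspaces, immediately gives the implication $\mathbf{TET(X)}\Rightarrow\mathbf{GTET(X)}$ one way and $\mathbf{GTET(X)}\Rightarrow\mathbf{TET(X)}$ the other way once the converse passage below is available.

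Second, for a \emph{topological} $\mathbf{X}$ I would prove the ``upgrade'': every $\langle\mu,g\tau_n\rangle$-continuous $f\colon X\to\mathbb{R}$ is $\langle\mu,\tau_n\rangle$-continuous. Indeed $f^{-1}[(-\infty,q)]$ and $f^{-1}[(p,+\infty)]$ lie in $\mu$, so $f^{-1}[(p,q)]=f^{-1}[(-\infty,q)]\cap f^{-1}[(p,+\infty)]\in\mu$ because $\mu$ is now closed under finite intersections; since the intervals $(p,q)$ with $p,q\in\mathbb{Q}$ form a base of $\tau_n$ and preimages commute with unions, $f^{-1}[V]\in\mu$ for all $V\in\tau_n$. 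Combining this with the previous paragraph yields the equivalences $\mathbf{UL(X)}\Leftrightarrow\mathbf{GUL(X)}$ and $\mathbf{TET(X)}\Leftrightarrow\mathbf{GTET(X)}$ of part (2); for the latter one also uses that the subspace $\mathbf{A}=\langle A,\mu|_A\rangle$ of a topological space is a topological space, so the upgrade applies to the function $f$ on the $\mu$-closed subspace appearing in the definitions of $\mathbf{TET(X)}$ and $\mathbf{GTET(X)}$ as well. For ``$\mathbf{TET(X)}\Rightarrow\mathbf{UL(X)}$'' I would take disjoint $\mu$-closed $A,B$, note that $A\cup B$ is $\mu$-closed since $X\setminus(A\cup B)=(X\setminus A)\cap(X\setminus B)$ is a finite intersection of $\mu$-open sets (this is the one place topologicalness is genuinely used), define $f\equiv 0$ on $A$ and $f\equiv 1$ on $B$, observe that $A=(A\cup B)\setminus B$ and $B=(A\cup B)\setminus A$ are $\mu|_{A\cup B}$-open so that $f$ is $\langle\mu|_{A\cup B},\tau_n\rangle$-continuous, and extend it by $\mathbf{TET(X)}$.

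For part (3), given disjoint $\mu$-closed $A,B$ I would apply $\mathbf{GUL(X)}$ to get a $\langle\mu,g\tau_n\rangle$-continuous $f\colon X\to\mathbb{R}$ with $A\subseteq f^{-1}[\{0\}]$ and $B\subseteq f^{-1}[\{1\}]$, and set $U=f^{-1}[(-\infty,\tfrac12)]$ and $V=f^{-1}[(\tfrac12,+\infty)]$; both rays belong to $g\tau_n$, so $U,V\in\mu$, they are disjoint, and $A\subseteq U$, $B\subseteq V$, which is the normality of $\mathbf{X}$ (separation of disjoint closed sets by disjoint open sets). None of this is deep, so I do not expect a real obstacle; the only points requiring care — and the reason part (2) is stated only for topological $\mathbf{X}$ — are that the upgrade from $g\tau_n$-continuity to $\tau_n$-continuity genuinely needs finite intersections of open sets to be open, and that $A\cup B$ need not be $\mu$-closed in a general GT space. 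These are precisely the loopholes exploited by the $\mathbf{ZF}$ counterexample announced in the abstract.
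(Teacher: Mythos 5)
Your proof is correct and is exactly the straightforward argument the paper has in mind (the paper gives no proof, declaring the proposition straightforward): the inclusion $g\tau_n\subseteq\tau_n$ gives (1), closure of a topology under finite intersections gives the upgrade needed for (2) and the $\mu$-closedness of $A\cup B$ needed for $\mathbf{TET(X)}\Rightarrow\mathbf{UL(X)}$, and the preimages of the two rays give (3). No gaps.
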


For a topological space $\mathbf{X}$, the notation $\mathbf{UL(X)}$ and $\mathbf{TET(X)}$ was established in \cite[Remark 2,3]{kw}.

In the following definition, we recall three known froms from \cite{hr} and introduce a new one.

\begin{definition} 
\label{s1d8}
\begin{enumerate}
\item $\mathbf{DC}$ (the Principle of Dependent Choices, \cite[Form 43]{hr}): For every non-empty set $A$ and every binary relation $S$ on $A$, the following implication holds:  $((\forall x\in A)(\exists y\in A) \langle x, y\rangle\in S)\rightarrow ((\exists a\in A^{\omega})(\forall n\in\omega)\langle a(n), a(n+1)\rangle\in S).$
\item $\mathbf{UL}$ (Urysohn's Lemma, \cite[Form 78]{hr}): For every normal topological space $\mathbf{X}$, $\mathbf{UL(X)}$ holds.
\item $\mathbf{GUL}$: For every normal GT space $\mathbf{X}$, $\mathbf{GUL(X)}$ holds.
\item $\mathbf{TET}$ (the Tietze-Urysohn Extension Theorem, \cite[Form 375]{hr}): For every normal topological space $\mathbf{X}$, $\mathbf{TET(X)}$ holds.
\end{enumerate}
\end{definition}

In \cite{cs1}, \'A.  Csasz\'ar proved in $\mathbf{ZFC}$ the following version of Urysohn's Lemma:

\begin{theorem}
\label{s1t5} 
(Cf. \cite[Theorem 3.3]{cs1}.) $\mathbf{[ZFC]}$ If $\mathbf{X}=\langle X, \mu\rangle$ is a normal GT space, then $\mathbf{GUL(X)}$ holds.
\end{theorem}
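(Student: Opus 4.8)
The plan is to follow the classical proof of Urysohn's Lemma, but with the codomain $\langle\mathbb{R},\tau_n\rangle$ replaced by $\mathbf{R}=\langle\mathbb{R},g\tau_n\rangle$; the point of this replacement is that the standard Urysohn function associated with a suitably nested family of $\mu$-open sets will be $\langle\mu,g\tau_n\rangle$-continuous automatically, even though in a genuine GT space it need not be $\langle\mu,\tau_n\rangle$-continuous. So fix disjoint $\mu$-closed sets $A$ and $B$. A useful preliminary remark is that $X\setminus A$ and $X\setminus B$ are $\mu$-open and $(X\setminus A)\cup(X\setminus B)=X\setminus(A\cap B)=X$, so $X\in\mu$; hence $\emptyset=X\setminus X$ is $\mu$-closed, and in particular $\cl_{\mu}(\emptyset)=\emptyset$, which keeps the usual bookkeeping valid.

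Next I would record the usual insertion step coming from normality: if $F$ is $\mu$-closed, $G$ is $\mu$-open and $F\subseteq G$, then there is a $\mu$-open $V$ with $F\subseteq V\subseteq\cl_{\mu}(V)\subseteq G$. Indeed $F$ and $X\setminus G$ are disjoint $\mu$-closed sets, so normality produces disjoint $\mu$-open $V\supseteq F$ and $W\supseteq X\setminus G$, and then $\cl_{\mu}(V)\subseteq X\setminus W\subseteq G$. This argument uses only unions of $\mu$-open sets and complementation, never a finite intersection of $\mu$-open sets, so it remains valid in a GT space. Working in $\mathbf{ZFC}$, I would fix a well-ordering of $\mu$ and run the familiar recursion along an enumeration $p_1=1$, $p_2=0$, $p_3,p_4,\dots$ of $\mathbb{Q}\cap[0,1]$, constructing $\mu$-open sets $U_p$ for $p\in\mathbb{Q}\cap[0,1]$ with $U_1=X\setminus B$, $A\subseteq U_0$, and the invariant $\cl_{\mu}(U_p)\subseteq U_q$ whenever $p<q$: at stage $n\geq 3$ one inserts $U_{p_n}$, via the step above, between the already-defined immediate predecessor and successor of $p_n$ among $p_1,\dots,p_{n-1}$, always taking the $\mu$-least admissible set. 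Put $U_p=\emptyset$ for rational $p<0$ and $U_p=X$ for rational $p>1$, and define $f\colon X\to\mathbb{R}$ by $f(x)=\inf\{p\in\mathbb{Q}:x\in U_p\}$. Then $0\leq f\leq 1$; if $x\in A$ then $x\in U_p$ for every rational $p\geq 0$, so $f(x)=0$; if $x\in B$ then $x\notin U_p$ for every rational $p\leq 1$, so $f(x)=1$. Hence $A\subseteq f^{-1}[\{0\}]$ and $B\subseteq f^{-1}[\{1\}]$.

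It remains to check that $f$ is $\langle\mu,g\tau_n\rangle$-continuous, which is exactly where the choice of codomain pays off. Using the invariant $\cl_{\mu}(U_p)\subseteq U_q$ for $p<q$, one verifies, as in the classical case, that for every $a\in\mathbb{R}$
\begin{align*}
f^{-1}[(-\infty,a)] &= \bigcup\{U_p : p\in\mathbb{Q},\ p<a\},\\
f^{-1}[(a,+\infty)] &= \bigcup\{X\setminus\cl_{\mu}(U_p) : p\in\mathbb{Q},\ p>a\},
\end{align*}
each a union of $\mu$-open sets, hence a member of $\mu$. Consequently $f^{-1}[(-\infty,a)\cup(b,+\infty)]=f^{-1}[(-\infty,a)]\cup f^{-1}[(b,+\infty)]$ is $\mu$-open, and $f^{-1}[\emptyset]=\emptyset\in\mu$, $f^{-1}[\mathbb{R}]=X\in\mu$. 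Since $g\tau_n$ consists precisely of $\emptyset$, $\mathbb{R}$, the rays $(-\infty,a)$ and $(a,+\infty)$, and the sets $(-\infty,a)\cup(b,+\infty)$ with $a<b$, this shows that $f$ is $\langle\mu,g\tau_n\rangle$-continuous, which establishes $\mathbf{GUL(X)}$.

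I expect the real content, as opposed to transcription of the textbook argument, to be concentrated in this last step: in a true GT space $f^{-1}[(a,b)]=f^{-1}[(-\infty,b)]\cap f^{-1}[(a,+\infty)]$ need not be $\mu$-open, so $f$ generally fails to be $\langle\mu,\tau_n\rangle$-continuous, and it is precisely because the proper nonempty members of $g\tau_n$ are the rays and their two-sided unions that all the relevant preimages are forced to be unions of the sets $U_p$ and of the sets $X\setminus\cl_{\mu}(U_p)$, and nothing more. The insertion lemma and the recursion are routine adaptations of their classical counterparts (the former surviving the passage to GT spaces only because it never intersects two open sets), and the single use of the Axiom of Choice is the well-ordering of $\mu$, which makes the $\omega$-length recursion with its nested choices a legitimate ordinary recursion.
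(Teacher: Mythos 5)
Your proof is correct and follows essentially the same route as the paper: the paper defers the theorem to Cs\'asz\'ar's argument, but its own machinery (the characterization of $\mathbf{GUL(X)}$ via a nested family $\{U_r\}$ with $\cl_{\mu}(U_r)\subseteq U_s$ for $r<s$ in Proposition \ref{s3p1}, plus the recursive construction of that family from normality in Theorem \ref{s3t2}) is exactly your construction, with your well-ordering of $\mu$ playing the role the paper assigns to $\mathbf{DC}$. Your identification of the key point --- that the preimages of the rays are unions of the $U_p$ and of the $X\setminus\cl_{\mu}(U_p)$, so only $g\tau_n$-continuity (not $\tau_n$-continuity) survives the absence of finite intersections --- matches the paper's intent precisely.
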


In \cite{L} (cf. also \cite{gt} and \cite{hkrr}), it was proved that it is consistent with $\mathbf{ZF}$ the existence of a normal topological space $\mathbf{X}$ for which $\mathbf{UL(X)}$ is false. This implies that  Theorem \ref{s1t5} is not a theorem of $\mathbf{ZF}$. On the other hand, it is well known that, in $\mathbf{ZF}$, $\mathbf{DC}$ implies $\mathbf{TET}$ and, in consequence, $\mathbf{UL}$ also follows from $\mathbf{DC}$ (cf. \cite[entries (43, 78) and (43, 375), pages 339 and 386]{hr}). It was shown in \cite{hkrr} that there is a model of $\mathbf{ZF}$ in which a compact Tychonoff space $\mathbf{X}$ fails to satisfy $\mathbf{TET(X)}$. Therefore, in general, for a compact Hausdorff space $\mathbf{X}$,  $\mathbf{UL(X)}$ need not imply $\mathbf{TET(X)}$ in a model of $\mathbf{ZF}$.

In this article, we show that, in general, for a normal GT space $\mathbf{X}$, $\mathbf{GUL(X)}$ need not imply $\mathbf{UL(X)}$ in $\mathbf{ZF}$. We observe that, in $\mathbf{ZF+DC}$, every normal GT space $\mathbf{X}$ satisfies $\mathbf{GUL(X)}$. We introduce a concept of an effectively normal GT space and prove that, for every effectively normal GT space $\mathbf{X}$, $\mathbf{GUL(X)}$ holds in $\mathbf{ZF}$. We show in $\mathbf{ZF}$ necessary and sufficient conditions for a normal GT space to satisfy $\mathbf{GUL(X)}$, as well as more complicated necessary and sufficient conditions for $\mathbf{X}$ to satisfy $\mathbf{UL(X)}$. Some of the sufficient conditions are shown to be also necessary. Furthermore, we discuss $\mathbf{TET(X)}$ and $\mathbf{GTET(X)}$. We show that there is in $\mathbf{ZF}$ a GT space $\mathbf{X}$ which satisfies the conjunction $\mathbf{TET(X)}\wedge \neg\mathbf{UL(X)}$.

\section{The GT space $\mathbf{R}$}
\label{s2}

In this section, we concentrate on the space GT space $\mathbf{R}=\langle \mathbb{R}, g\tau_n\rangle$ (see Definition \ref{s1d4}(2)). We show in $\mathbf{ZF}$ that $\mathbf{UL(R)}$ is false but $\mathbf{GUL(R)}$, $\mathbf{GTET(R)}$ and $\mathbf{TET(R)}$ are all true. 

To begin, let us observe that $\mathcal{D}(\mathbf{R})=\{\emptyset, \mathbb{R}\}\cup\{(-\infty, a]: a\in\mathbb{R}\}\cup\{[a,+\infty): a\in\mathbb{R}\}\cup\{ [a,b]: a,b\in\mathbb{R}\text{ and } a\leq b\}$ is the collection of all $g\tau_n$-closed sets. If $A,B$ is a pair of non-empty disjoint $g\tau_n$-closed sets, then there exists $c\in\mathbb{R}\setminus(A\cup B)$ such that $A\subseteq (-\infty, c)$ and $B\subseteq (c,+\infty)$ or $B\subseteq (-\infty, c)$ and $A\subseteq (c, \infty)$. This simple observation shows that the space $\mathbf{R}$ is normal.

\begin{lemma}
\label{s2lem1}
$[\mathbf{ZF}]$ Let $f:\mathbb{R}\to\mathbb{R}$ be a $\langle g\tau_n, \tau_n\rangle$-continuous function. Then the set $f[\mathbb{R}]$ is $\tau_n$-connected. 
\end{lemma}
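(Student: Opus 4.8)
The plan is to reduce the assertion to the classical fact — provable in $\mathbf{ZF}$ — that $\langle\mathbb{R},\tau_n\rangle$ is a connected topological space and that a continuous image of a connected space is connected. The one observation that makes this work is that $g\tau_n\subseteq\tau_n$: each of $\emptyset$, $\mathbb{R}$, $(-\infty,a)$, $(a,+\infty)$ and $(-\infty,a)\cup(b,+\infty)$ (with $a<b$) is $\tau_n$-open, being a union of $\tau_n$-open rays. Hence, if $f:\mathbb{R}\to\mathbb{R}$ is $\langle g\tau_n,\tau_n\rangle$-continuous, then for every $V\in\tau_n$ we have $f^{-1}[V]\in g\tau_n\subseteq\tau_n$, so $f$ is $\langle\tau_n,\tau_n\rangle$-continuous in the usual sense. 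Since $\langle\mathbb{R},\tau_n\rangle$ is connected in $\mathbf{ZF}$ and continuous images of connected spaces are connected (the standard argument uses no choice principle), it follows that $f[\mathbb{R}]$ is $\tau_n$-connected.

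If one prefers to avoid quoting the connected-image theorem and to argue directly with the order structure of $\mathbb{R}$, here is the variant I would present instead. Recall that a subset of $\mathbb{R}$ is $\tau_n$-connected if and only if it is order-convex, a characterization available in $\mathbf{ZF}$. So suppose, toward a contradiction, that $f[\mathbb{R}]$ is not order-convex and fix reals $y_1<c<y_2$ with $y_1,y_2\in f[\mathbb{R}]$ and $c\notin f[\mathbb{R}]$. Put $U_1=f^{-1}[(-\infty,c)]$ and $U_2=f^{-1}[(c,+\infty)]$. By continuity, $U_1,U_2\in g\tau_n$; they are disjoint, both non-empty (as $f[\mathbb{R}]$ meets $(-\infty,c)$ and $(c,+\infty)$), and $U_1\cup U_2=\mathbb{R}$ because $c\notin f[\mathbb{R}]$. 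Then $U_1=\mathbb{R}\setminus U_2$ is simultaneously $g\tau_n$-open and $g\tau_n$-closed, i.e. $U_1\in g\tau_n\cap\mathcal{D}(\mathbf{R})$. Comparing the explicit list defining $g\tau_n$ with the list for $\mathcal{D}(\mathbf{R})$ recalled just above the lemma shows at once that $g\tau_n\cap\mathcal{D}(\mathbf{R})=\{\emptyset,\mathbb{R}\}$, since every non-empty proper member of $g\tau_n$ contains a ray and so cannot have the half-closed or closed-bounded shape of a proper non-empty member of $\mathcal{D}(\mathbf{R})$. This contradicts $U_1$ being non-empty and proper.

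I do not expect a genuine obstacle here; the only point deserving a little care is to confirm that the ingredients invoked — connectedness of $\langle\mathbb{R},\tau_n\rangle$, preservation of connectedness under continuous maps, or, in the second route, the order-convexity characterization of connected subsets of $\mathbb{R}$ — are bona fide theorems of $\mathbf{ZF}$, which they are, resting only on the least-upper-bound property of $\mathbb{R}$. This lemma will then serve as the key tool for deriving $\neg\mathbf{UL(R)}$, since a $\langle g\tau_n,\tau_n\rangle$-continuous function separating two disjoint non-empty $g\tau_n$-closed sets would be forced to have a non-convex range.
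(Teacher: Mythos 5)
Your proof is correct, and your second variant is essentially the paper's own argument: the paper simply observes that $\mathcal{D}(\mathbf{R})\cap g\tau_n=\{\emptyset,\mathbb{R}\}$, concludes that the GT space $\mathbf{R}$ is connected, and invokes preservation of connectedness under continuous maps; your disjoint-preimage argument with $U_1=f^{-1}[(-\infty,c)]$ and $U_2=f^{-1}[(c,+\infty)]$ is that same fact written out in detail. Your first variant is a genuinely different (and slightly slicker) shortcut: since $g\tau_n\subseteq\tau_n$, any $\langle g\tau_n,\tau_n\rangle$-continuous $f$ is already $\langle\tau_n,\tau_n\rangle$-continuous, so the classical $\mathbf{ZF}$ theorem on continuous images of the connected space $\langle\mathbb{R},\tau_n\rangle$ applies directly. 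What the paper's route buys is that it depends only on the generalized topology having no non-trivial clopen sets, so it would survive even if $g\tau_n$ were not contained in $\tau_n$; what your first route buys is that it bypasses verifying $g\tau_n\cap\mathcal{D}(\mathbf{R})=\{\emptyset,\mathbb{R}\}$ altogether. One small cosmetic point in your second variant: the justification that no non-empty proper member of $g\tau_n$ lies in $\mathcal{D}(\mathbf{R})$ is stated a bit loosely (sets such as $(-\infty,a]$ also contain rays); the cleanest check is that every non-empty proper member of $g\tau_n$ is $\tau_n$-open while every non-empty proper member of $\mathcal{D}(\mathbf{R})$ is $\tau_n$-closed, and no proper non-empty subset of the connected space $\langle\mathbb{R},\tau_n\rangle$ is both. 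All ingredients you cite are indeed theorems of $\mathbf{ZF}$.
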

\begin{proof}
Since $\mathcal{D}(\mathbf{R})\cap g\tau_n=\{\emptyset, \mathbb{R}\}$, the GT space $\mathbf{R}$ is connected. Thus, the set $f[\mathbb{R}]$ is connected in $\langle \mathbb{R}, \tau_n\rangle$. 
\end{proof}

\begin{proposition}
$[\mathbf{ZF}]$ $\mathbf{UL(R)}$ is false.
\end{proposition}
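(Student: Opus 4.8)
The plan is to refute $\mathbf{UL(R)}$ by producing one pair of disjoint $g\tau_n$-closed sets that admits no continuous separating function of the required type. I would take $A=(-\infty,0]$ and $B=[1,+\infty)$; both lie in $\mathcal{D}(\mathbf{R})$, so they are $g\tau_n$-closed, and they are disjoint, hence a legitimate test pair for $\mathbf{UL(R)}$. Assume toward a contradiction that there is a $\langle g\tau_n,\tau_n\rangle$-continuous $f:\mathbb{R}\to\mathbb{R}$ with $A\subseteq f^{-1}[\{0\}]$ and $B\subseteq f^{-1}[\{1\}]$; equivalently, $f$ is identically $0$ on $(-\infty,0]$ and identically $1$ on $[1,+\infty)$.

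The heart of the argument is to examine the preimage of the single $\tau_n$-open set $(0,1)$. By $\langle g\tau_n,\tau_n\rangle$-continuity, $f^{-1}[(0,1)]\in g\tau_n$. Since $f$ equals $0$ on $(-\infty,0]$ and equals $1$ on $[1,+\infty)$, and $0,1\notin(0,1)$, no point of $(-\infty,0]\cup[1,+\infty)$ belongs to $f^{-1}[(0,1)]$; hence $f^{-1}[(0,1)]\subseteq(0,1)$, so $f^{-1}[(0,1)]$ is a bounded member of $g\tau_n$. But inspecting the description of $g\tau_n$ in Definition~\ref{s1d4}(2), every nonempty element of $g\tau_n$ is unbounded, so $f^{-1}[(0,1)]=\emptyset$.

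To finish, I would appeal to Lemma~\ref{s2lem1}: the set $f[\mathbb{R}]$ is $\tau_n$-connected, hence a subinterval of $\mathbb{R}$, and it contains $f(0)=0$ and $f(1)=1$; therefore $[0,1]\subseteq f[\mathbb{R}]$ and in particular $f^{-1}[(0,1)]\neq\emptyset$. This contradicts the conclusion of the previous paragraph. Hence no such $f$ exists, so $\mathbf{UL(R)}$ is false.

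Everything here is carried out in $\mathbf{ZF}$: the only ingredients are the explicit list of $g\tau_n$-open sets, the fact that connected subsets of $\langle\mathbb{R},\tau_n\rangle$ are intervals, and Lemma~\ref{s2lem1}, none of which uses any choice. I do not expect a real obstacle; the only point requiring insight is choosing $A,B$ to be the two ``outward'' rays, which forces $f^{-1}[(0,1)]$ to be simultaneously bounded (hence empty, being $g\tau_n$-open) and nonempty (by connectedness of the image).
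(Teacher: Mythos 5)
Your proof is correct and follows essentially the same strategy as the paper's: assume such an $f$ exists, invoke Lemma \ref{s2lem1} to conclude $[0,1]\subseteq f[\mathbb{R}]$, and then contradict the explicit description of $g\tau_n$. The only differences are cosmetic: the paper takes $A=[0,1]$, $B=[2,3]$ and derives three pairwise disjoint non-empty $g\tau_n$-open preimages (impossible, since $g\tau_n$ cannot contain three such sets), whereas you take the two outward rays and derive a non-empty bounded $g\tau_n$-open set (impossible, since every non-empty member of $g\tau_n$ is unbounded); both contradictions are equally immediate and both arguments are choice-free.
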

\label{s2p2}

\begin{proof}
Suppose that $\mathbf{UL(R)}$ is true. Since the sets $A=[0,1]$ and $B=[2,3]$ are both $g\tau_n$-closed and $A\cap B=\emptyset$, by $\mathbf{UL(R)}$, there exists a $\langle g\tau_n, \tau_n\rangle$-continuous function $f:\mathbb{R}\to\mathbb{R}$ such that $A\subseteq f^{-1}[\{0\}]$ and $B\subseteq f^{-1}[\{1\}]$. It follows from Lemma \ref{s2lem1} that $[0,1]\subseteq f[\mathbb{R}]$.  Let $U=f^{-1}[(-1, \frac{1}{4})]$, $V=f^{-1}[(\frac{1}{3}, \frac{2}{3})]$ and $W=f^{-1}[(\frac{3}{4}, +\infty)]$. The sets $U,V,W$ are pairwise disjoint, non-empty and $g\tau_n$-open. This is impossible. Hence $\mathbf{UL(R)}$ is false.
\end{proof}

\begin{proposition}
\label{s2p3}
$[\mathbf{ZF}]$ $\mathbf{GUL(R)}$ is true. 
\end{proposition}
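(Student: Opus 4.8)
The plan is to prove $\mathbf{GUL(R)}$ directly, without any appeal to choice, by exhibiting an explicit $\langle g\tau_n,g\tau_n\rangle$-continuous function separating a given pair of disjoint $g\tau_n$-closed sets. First I would dispose of the trivial cases: if $A=\emptyset$ take the constant function $f\equiv 0$, and if $B=\emptyset$ take $f\equiv 1$ (both constant functions are continuous into $\langle\mathbb{R},g\tau_n\rangle$ since preimages of $g\tau_n$-open sets are either $\emptyset$ or $\mathbb{R}$, both of which lie in $g\tau_n$). So assume $A,B$ are non-empty disjoint $g\tau_n$-closed sets. By the observation recalled just before Lemma \ref{s2lem1}, the $g\tau_n$-closed sets are exactly the members of $\mathcal{D}(\mathbf{R})$, and there exists $c\in\mathbb{R}\setminus(A\cup B)$ with $A$ and $B$ lying on opposite sides of $c$; by symmetry (composing with $x\mapsto 1-x$ afterwards if needed) we may assume $A\subseteq(-\infty,c)$ and $B\subseteq(c,+\infty)$.

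Next I would write down the candidate separating function. A natural choice is the monotone step/ramp function
\[
f(x)=\begin{cases}0,& x\le c,\\ 1,& x>c,\end{cases}
\]
or, if one prefers a version that is also $\langle g\tau_n,\tau_n\rangle$-continuous-looking, a continuous ramp; but since the target is $\langle\mathbb{R},g\tau_n\rangle$, the step function is simplest. Clearly $A\subseteq f^{-1}[\{0\}]$ and $B\subseteq f^{-1}[\{1\}]$. The substantive step is verifying $\langle g\tau_n,g\tau_n\rangle$-continuity, i.e. that $f^{-1}[V]\in g\tau_n$ for every $V\in g\tau_n$. Since $f$ takes only the values $0$ and $1$, for each basic $g\tau_n$-open $V$ the preimage $f^{-1}[V]$ is one of $\emptyset$, $(-\infty,c]$, $(c,+\infty)$, or $\mathbb{R}$, according to which of $0,1$ lie in $V$. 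The only one of these four sets that might fail to be $g\tau_n$-open is $(-\infty,c]$. Here I would use the key arithmetic fact: $0\in V$ for a $g\tau_n$-open $V$ forces $V$ to contain a whole neighbourhood $(-\infty,a)$ of $-\infty$ with $a>0$; combined with $1\notin V$ this pins down $V\cap\{$relevant values$\}$, and one checks the preimage is actually $(-\infty,c)$ or $(-\infty,d)$ for a suitable $d$ — in any case a set of the form $(-\infty,\cdot)$, hence in $g\tau_n$. (I would double-check whether the step function literally works or whether the value at $c$ needs to be shifted slightly; if $(-\infty,c]$ genuinely arises and is not $g\tau_n$-open, I would instead use $f(x)=0$ for $x<c$ and $f(x)=1$ for $x\ge c$, whose nontrivial preimages are $(-\infty,c)$ and $[c,+\infty)$ — and $[c,+\infty)\notin g\tau_n$ either, so in fact the correct move is to pick $c'$ with $A\subseteq(-\infty,c')$, $B\subseteq(c',+\infty)$, $c'\notin A\cup B$, and set $f=0$ on $(-\infty,c')$, $f=1$ on $[c',+\infty)$, then observe $f^{-1}$ of any $g\tau_n$-open set is a genuine $g\tau_n$-open set because $1\notin V$ whenever $0\in V$ and vice versa is impossible for a connected-target argument — this case analysis is exactly the routine calculation I would carry out carefully.)

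The main obstacle is therefore the bookkeeping in the continuity check: one must be careful that the single "jump point" of $f$ is placed so that \emph{all four} possible preimages — determined by the four subsets $\{0\}$, $\{1\}$, $\{0,1\}$, $\emptyset$ of the value set that a $g\tau_n$-open $V$ can meet — land inside $g\tau_n$. Because $g\tau_n$ contains the sets $(-\infty,a)$ and $(a,+\infty)$ but not their closures, and because no $g\tau_n$-open set can contain both $0$ and $1$ unless it is all of $\mathbb{R}$ (the relevant members of $g\tau_n$ meeting both $0$ and $1$ are only $\mathbb{R}$ itself, since a set $(-\infty,a)\cup(b,+\infty)$ with $a<b$ cannot contain both $0$ and $1$ when $0<c<1$-type ordering is arranged — here I would instead rescale so $A,B$ map to $0,1$ with $c$ between them and choose the jump at $c$), the four preimages reduce to $\emptyset,\ (-\infty,c),\ [c,+\infty)\text{ or }(c,+\infty),\ \mathbb{R}$, and the finitely many cases are settled by inspection. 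This gives $f^{-1}[V]\in g\tau_n$ for all $V\in g\tau_n$, so $f$ witnesses $\mathbf{GUL(R)}$, and the whole argument is manifestly carried out in $\mathbf{ZF}$ since the only "choice" made is of a single real number $c$, available by the structural description of $\mathcal{D}(\mathbf{R})$.
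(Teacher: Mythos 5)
There is a genuine gap: the step function at the heart of your argument cannot be $\langle g\tau_n,g\tau_n\rangle$-continuous, no matter where you place the jump or which side gets the endpoint. If $f$ takes only the two values $0$ and $1$ and both values are attained, then $f^{-1}[(-\infty,\tfrac12)]=f^{-1}[\{0\}]$ and $f^{-1}[(\tfrac12,+\infty)]=f^{-1}[\{1\}]$ would both have to be $g\tau_n$-open; but they are complementary and non-empty, and the only subsets of $\mathbb{R}$ that are simultaneously $g\tau_n$-open and $g\tau_n$-closed are $\emptyset$ and $\mathbb{R}$ (this is exactly the connectedness of $\mathbf{R}$ used in Lemma \ref{s2lem1}). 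Concretely, whichever convention you adopt at the jump point, one of the two preimages is a closed half-line $(-\infty,c]$ or $[c,+\infty)$, and neither belongs to $g\tau_n$. Your parenthetical remarks circle around exactly this obstruction, but the concluding claim that ``the finitely many cases are settled by inspection'' is false: one of those cases always fails. (The subsidiary claim that no $V\in g\tau_n$ meets both $0$ and $1$ unless $V=\mathbb{R}$ is also wrong --- take $V=(-\infty,\tfrac12)\cup(\tfrac34,+\infty)$ --- though that particular error is not what sinks the proof.)

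The fix, which is what the paper does, is to separate $A$ and $B$ by \emph{two} points $c<d$ (possible because each non-empty bounded-above $g\tau_n$-closed set contains its supremum, so $\sup A<\inf B$ after relabelling) and to interpolate linearly on $(c,d)$: set $f=0$ on $(-\infty,c]$, $f=\frac{x-c}{d-c}$ on $(c,d)$, and $f=1$ on $[d,+\infty)$. This $f$ is monotone and has no jump, so the preimage of every set $(-\infty,a)$ or $(a,+\infty)$ is an \emph{open} half-line (or $\emptyset$ or $\mathbb{R}$), hence lies in $g\tau_n$, and preimages of finite unions of such rays are again in $g\tau_n$. Your reduction to the two-sided separation by a single point $c$, and your observation that no choice is needed, are fine; it is only the choice of separating function that must be replaced by a genuinely interval-valued ramp.
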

\begin{proof}
Consider an arbitrary pair $A,B$ of non-empty disjoint $g\tau_n$-closed sets. There exist real numbers $c, d$ such that $c<d$ and either $A\subseteq (-\infty, c]$ and $B\subseteq [d, +\infty)$ or $B\subseteq (-\infty, c]$ and $A\subseteq [d, +\infty)$. We may assume that $A\subseteq (-\infty, c]$ and $B\subseteq [d, +\infty)$. Then we define a function $f:\mathbb{R}\to\mathbb{R}$ as follows:
$$ f(x)=\begin{cases} 0 &\text{if $x\in (-\infty,c)$;}\\
1 &\text{if $x\in [d,+\infty)$;}\\
\frac{x-c}{d-c} &\text{if $x\in (c,d)$.}
\end{cases}
$$
The function $f$ is $\langle g\tau_n, g\tau_n\rangle$-continuous, $A\subseteq f^{-1}[\{0\}]$ and $B\subseteq f^{-1}[\{1\}]$.  
\end{proof}

\begin{proposition}
\label{s2p4}
$[\mathbf{ZF}]$ Both $\mathbf{GTET(R)}$ and $\mathbf{TET(R)}$ are true.
\end{proposition}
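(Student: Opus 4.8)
The plan is to handle $\mathbf{GTET(R)}$ and $\mathbf{TET(R)}$ separately, after first pinning down the induced generalized topology $g\tau_n|_A$ on each $g\tau_n$-closed set $A$. Recall that the $g\tau_n$-closed sets are $\emptyset$, $\mathbb{R}$, the rays $(-\infty,a]$ and $[a,+\infty)$, and the compact intervals $[a,b]$ with $a\le b$; in each case one writes $g\tau_n|_A$ out explicitly and observes a strong rigidity: every non-empty member of $g\tau_n|_A$ ``reaches the end(s)'' of $A$ — e.g. every non-empty $V\in g\tau_n|_{[a,b]}$ contains $a$ or contains $b$, while every non-empty $V\in g\tau_n|_{(-\infty,a]}$ either contains $a$ or has the form $(-\infty,c)$ with $c\le a$. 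I would also record the routine reduction that, since $\{(-\infty,s):s\in\mathbb{R}\}\cup\{(s,+\infty):s\in\mathbb{R}\}$ is a subbase of $g\tau_n$ (closed under unions), a function into $\langle\mathbb{R},g\tau_n\rangle$ is continuous iff the preimages of these half-lines are open, and similarly a function into $\langle\mathbb{R},\tau_n\rangle$ is continuous iff preimages of bounded open intervals are open.

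For $\mathbf{TET(R)}$, the key point is that a $\langle g\tau_n|_A,\tau_n\rangle$-continuous function $f\colon A\to\mathbb{R}$, with $A$ a non-empty $g\tau_n$-closed set, must be constant. Arguing as in Lemma \ref{s2lem1}, one checks that $\langle A,g\tau_n|_A\rangle$ is connected (its only $g\tau_n|_A$-clopen subsets are $\emptyset$ and $A$), so $f[A]$ is a $\tau_n$-connected subset of $\mathbb{R}$, hence an interval; and, arguing as in the proof that $\mathbf{UL(R)}$ is false, $g\tau_n|_A$ contains no three pairwise disjoint non-empty sets, so $f[A]$ cannot contain a non-degenerate interval (three disjoint open subintervals would pull back to three disjoint non-empty members of $g\tau_n|_A$). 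Thus $f\equiv r$ for some $r\in\mathbb{R}$, and the constant map $\tilde f\equiv r$ on $\mathbb{R}$ is $\langle g\tau_n,\tau_n\rangle$-continuous and extends $f$; if $A=\emptyset$ any constant map works.

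For $\mathbf{GTET(R)}$ the same rigidity is used, but now $f$ need not be constant (the identity on $[a,b]$ is $\langle g\tau_n|_{[a,b]},g\tau_n\rangle$-continuous). The plan is to extend $f$ by continuing it constantly past the ends of $A$: if $A=[a,b]$ set $\tilde f(x)=f(a)$ for $x<a$, $\tilde f(x)=f(x)$ for $x\in[a,b]$, $\tilde f(x)=f(b)$ for $x>b$; if $A=(-\infty,a]$ set $\tilde f(x)=f(x)$ for $x\le a$ and $\tilde f(x)=f(a)$ for $x>a$ (symmetrically for $[a,+\infty)$); and $\tilde f=f$ if $A=\mathbb{R}$, $\tilde f\equiv 0$ if $A=\emptyset$. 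To verify $\langle g\tau_n,g\tau_n\rangle$-continuity, fix $s\in\mathbb{R}$, put $L_s=\{x\in A:f(x)<s\}\in g\tau_n|_A$, and compute $\tilde f^{-1}[(-\infty,s)]$ by splitting into cases according to whether $f(a)<s$ or $f(a)\ge s$ (and likewise for $b$). By the explicit form of $g\tau_n|_A$, in each case $L_s$ has exactly the end-anchoring forced by those inequalities, and adjoining the relevant half-lines $(-\infty,a)$ and/or $(b,+\infty)$ collapses the union to $\emptyset$, $\mathbb{R}$, a half-line, or a set $(-\infty,p)\cup(q,+\infty)$ — in every case a member of $g\tau_n$. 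The computation of $\tilde f^{-1}[(s,+\infty)]$ is identical using $U_s=\{x\in A:f(x)>s\}$. Since $\tilde f$ restricts to $f$ on $A$, $\mathbf{GTET(R)}$ follows, and combined with the previous paragraph this proves the proposition.

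The main obstacle I anticipate is purely bookkeeping: the explicit identification of $g\tau_n|_A$ for each of the five shapes of $A$, the connectedness and ``no three disjoint non-empty open sets'' checks for $g\tau_n|_A$, and the exhaustive case analysis in the $\mathbf{GTET(R)}$ step confirming that gluing $L_s$ (or $U_s$) to the half-lines always lands back in $g\tau_n$. No form of choice is invoked, since every construction above is completely explicit in the data $A$, $f$, and $s$.
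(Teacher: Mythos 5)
Your proposal is correct, and it splits into a half that matches the paper and a half that does not. For $\mathbf{GTET(R)}$ you use essentially the paper's construction: extend $f$ constantly past the endpoint(s) of $P$ and verify, by cases on the possible shapes of $f^{-1}[(-\infty,s)]\cap P$ and $f^{-1}[(s,+\infty)]\cap P$ inside $g\tau_n|_P$, that the preimages of the subbasic half-lines land back in $g\tau_n$ (the paper writes this computation out in full for $\mathbf{TET(R)}$ and then says ``similar arguments'' for $\mathbf{GTET(R)}$; you do the reverse, which is arguably the better place to spend the effort, since for the target $g\tau_n$ the non-trivial cases actually occur). Where you genuinely diverge is $\mathbf{TET(R)}$: instead of repeating the case analysis with target $\tau_n$, you observe that every $\langle g\tau_n|_P,\tau_n\rangle$-continuous function on a non-empty $g\tau_n$-closed set $P$ is constant --- $\langle P, g\tau_n|_P\rangle$ is connected, so the image is an interval, and a non-degenerate interval would pull back three pairwise disjoint non-empty members of $g\tau_n|_P$, which cannot exist because every non-empty member of $g\tau_n|_P$ reaches an end of $P$. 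This is exactly the mechanism the paper uses to refute $\mathbf{UL(R)}$, transplanted to subspaces, and it is a correct claim (I checked the case inventory of $g\tau_n|_P$ for each shape of $P$). It makes $\mathbf{TET(R)}$ immediate and gives a sharper explanation of Corollary \ref{s2c5}: $\mathbf{TET(R)}$ holds for the degenerate reason that the only $\tau_n$-valued continuous functions on these subspaces are constants, so it cannot be expected to yield $\mathbf{UL(R)}$. The paper's single case analysis buys a uniform treatment of both statements with one computation; your split buys this extra structural fact at the cost of two separate arguments. Everything you use is explicit in $A$, $f$, $s$, so no choice is needed, as required.
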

\begin{proof}
Let us show that $\mathbf{TET(R)}$ is true.

Let $P$ be a non-empty $g\tau_n$-closed set such that $P\neq\mathbb{R}$ and $P$ is not a singleton. Let $f:P\to\mathbb{R}$ be a $\langle g\tau_n|_P,\tau_n\rangle$-continuous function. 

Suppose that $P=[a, b]$ for some $a,b\in\mathbb{R}$ such that $a<b$. We define a function $f_1:\mathbb{R}\to\mathbb{R}$ as follows:
$$ f_1(x)=\begin{cases} f(x) &\text{if $x\in P$;}\\
f(a) &\text{if $x\in (-\infty, a)$;}\\
f(b) &\text{if  $x\in (b, +\infty)$.}\end{cases}
$$
To show that $f_1$ is $\langle g\tau_n, \tau_n\rangle$-continuous, we consider any set $V\in\tau_n$ and put $U=f^{-1}[V]$. Then $U\in g\tau_n|_P$.  If $U=[a, c)$ for some $c\in (a, b]$, then  $f_1^{-1}[V]= (-\infty, c)\in\tau_n$. If $U=(d, b]$ for some $d\in[a, b)$, then $f_1^{-1}[V]=(d, +\infty)\in g\tau_n$. If $U=[a, c)\cup(d,b]$ for some $c,d\in[a,b]$ such that $a<c\leq d<b$, then $f_1^{-1}[V]=(-\infty,c)\cup(d,+\infty)\in g\tau_n$. If $U=\emptyset$, then $f_1^{-1}[V]=\emptyset\in g\tau_n$. If $U=P$, then $f_1^{-1}[V]=\mathbb{R}\in g\tau_n$. Hence $f_1$ is $\langle g\tau_n, \tau_n\rangle$-continuous. Of course, $f_1(x)=f(x)$ for every $x\in P$.

Suppose that $P=(-\infty, a]$ for some $a\in\mathbb{R}$. In this case, we define a $\langle g\tau_n,\tau_n\rangle$-continuous function $f_2:\mathbb{R}\to\mathbb{R}$ as follows:
$$ f_2(x)=\begin{cases} f(x) &\text{if $x\in P$;}\\
f(a) &\text{if $x\in (a, +\infty)$.}\\
\end{cases}
$$
Suppose that $P=[b,+\infty)$ for some $b\in\mathbb{R}$. In this case, we define a $\langle g\tau_n,\tau_n\rangle$-continuous function $f_3:\mathbb{R}\to\mathbb{R}$ as follows:
$$ f_3(x)=\begin{cases} f(x) &\text{if $x\in P$;}\\
f(b) &\text{if $x\in (-\infty, b)$.}\\
\end{cases}
$$
All this taken together shows that $\mathbf{TET(R)}$ is true.  Using similar arguments, one can check that $\mathbf{GTET(R)}$ is also true.
\end{proof}

\begin{corollary}
\label{s2c5}
 In $\mathbf{ZF}$, for a GT space $\mathbf{X}$, $\mathbf{GUL(X)}$ need not imply $\mathbf{UL(X)}$, and $\mathbf{TET(X)}$ need not imply $\mathbf{UL(X)}$. 
\end{corollary}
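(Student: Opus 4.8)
The plan is to simply exhibit the GT space $\mathbf{R}=\langle\mathbb{R},g\tau_n\rangle$ of Definition \ref{s1d4}(2) as a single witness for both non-implications. Everything needed has already been assembled in this section: Proposition \ref{s2p2} gives that $\mathbf{UL(R)}$ is false in $\mathbf{ZF}$, Proposition \ref{s2p3} gives that $\mathbf{GUL(R)}$ is true in $\mathbf{ZF}$, and Proposition \ref{s2p4} gives that both $\mathbf{TET(R)}$ and $\mathbf{GTET(R)}$ are true in $\mathbf{ZF}$. So the first task is merely to observe that $\mathbf{R}$ satisfies $\mathbf{GUL(R)}\wedge\neg\mathbf{UL(R)}$, which establishes the first non-implication, and the second task is to observe that $\mathbf{R}$ satisfies $\mathbf{TET(R)}\wedge\neg\mathbf{UL(R)}$, which establishes the second.

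Concretely, I would write: let $\mathbf{X}=\mathbf{R}$. By Proposition \ref{s2p2}, $\mathbf{UL(X)}$ fails. By Proposition \ref{s2p3}, $\mathbf{GUL(X)}$ holds; hence $\mathbf{GUL(X)}$ does not imply $\mathbf{UL(X)}$ in $\mathbf{ZF}$. By Proposition \ref{s2p4}, $\mathbf{TET(X)}$ holds; hence $\mathbf{TET(X)}$ does not imply $\mathbf{UL(X)}$ in $\mathbf{ZF}$ either. Since all three cited propositions are proved outright in $\mathbf{ZF}$ (no choice principle is used in their proofs — the relevant functions on $\mathbb{R}$ are defined by explicit piecewise formulas, and the continuity verifications are finite case checks), the corollary holds in $\mathbf{ZF}$.

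There is essentially no obstacle here: the genuine content lies entirely in Propositions \ref{s2p2}, \ref{s2p3}, and \ref{s2p4}, which have already been proved. The only point worth a moment's care is to confirm that a single space does the job for both clauses — which it does, since $\mathbf{R}$ simultaneously satisfies $\mathbf{GUL}$, $\mathbf{TET}$, and $\neg\mathbf{UL}$ — and that $\mathbf{R}$ is indeed a GT space in the sense of Definition \ref{s1d1}, which is immediate from Definition \ref{s1d4}(2) since $g\tau_n$ is closed under arbitrary unions and contains $\emptyset$ and $\mathbb{R}$. One might also remark that, by Proposition \ref{s1p7}(1), $\neg\mathbf{UL(R)}$ is the stronger failure, so the example is as sharp as possible. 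Thus the proof can be stated in just a few lines.
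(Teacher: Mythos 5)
Your proposal is correct and is exactly the argument the paper intends: the corollary follows immediately by taking $\mathbf{X}=\mathbf{R}$ and citing Propositions \ref{s2p2}, \ref{s2p3}, and \ref{s2p4}, all established in $\mathbf{ZF}$. The paper gives no separate proof precisely because the single space $\mathbf{R}$ witnesses both non-implications, as you observe.
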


\section{Conditions under which $\mathbf{GUL(X)}$ holds}
\label{s3}

Since, for every GT space $\mathbf{X}=\langle X, \mu\rangle$ such that $X\notin\mu$, $\mathbf{UL(X)}$ holds because there does not exist a pair $A,B$ of disjoint $\mu$-closed sets (see \cite[Proposition 2.1]{cs1}), we are concerned mainly with strong GT spaces. 

\begin{proposition}
\label{s3p1}
$[\mathbf{ZF}]$ Let $\mathbf{X}=\langle X, \mu\rangle$ be a strong GT space. Then $\mathbf{GUL(X)}$ holds if and only if, for every pair $A,B$ of non-empty disjoint $\mu$-closed sets there exists a collection $\{U_r: r\in\mathbb{Q}\cap(0,1)\}$ of $\mu$-open sets such that:
\begin{enumerate}
\item[(i)] for all $r,s\in\mathbb{Q}\cap(0,1)$, if $r<s$, then $\cl_{\mu}(U_r)\subseteq U_s$;
\item[(ii)] for every $r\in\mathbb{Q}\cap(0,1)$, $A\subseteq U_r$;
\item[(iii)] for every $r\in\mathbb{Q}\cap(0,1)$, $\cl_{\mu}(U_r)\subseteq X\setminus B$.
\end{enumerate}
\end{proposition}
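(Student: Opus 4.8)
The plan is to run the classical Urysohn construction in both directions, paying attention only to the fact that every set produced must be obtained by an explicit union, so that no choice principle is used (this is a $\mathbf{ZF}$ paper). Throughout I will use that in a GT space arbitrary intersections of $\mu$-closed sets are $\mu$-closed, so $\cl_{\mu}$ is a well-defined monotone operator with $\cl_{\mu}(S)$ the least $\mu$-closed set containing $S$, and that $\mathbf{X}$ being strong gives $X\in\mu$.

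For the implication ($\Leftarrow$), first dispose of the degenerate instances of $\mathbf{GUL(X)}$: if $A=\emptyset$ the constant function $f\equiv 1$ works, and if $B=\emptyset$ the constant function $f\equiv 0$ works (a constant function is $\langle\mu,g\tau_n\rangle$-continuous since each preimage is $\emptyset$ or $X$, both in $\mu$); so it remains to treat a non-empty disjoint pair $A,B$ of $\mu$-closed sets, for which we are handed a family $\{U_r:r\in\mathbb{Q}\cap(0,1)\}$ satisfying (i)--(iii). Define $f:X\to\mathbb{R}$ by $f(x)=\inf\{r\in\mathbb{Q}\cap(0,1):x\in U_r\}$ with the convention $\inf\emptyset=1$, so $f$ takes values in $[0,1]$. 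Condition (ii) gives $f(x)=0$ for $x\in A$, and (iii) gives $U_r\subseteq\cl_{\mu}(U_r)\subseteq X\setminus B$ for every $r$, hence $f(x)=1$ for $x\in B$; thus $A\subseteq f^{-1}[\{0\}]$ and $B\subseteq f^{-1}[\{1\}]$.

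The core of this direction is the $g\tau_n$-continuity of $f$. Since every member of $g\tau_n$ equals $\emptyset$, $\mathbb{R}$, or a union of at most two sets of the form $(-\infty,a)$ or $(a,+\infty)$, and preimages commute with unions, it suffices to show $f^{-1}[(-\infty,a)]\in\mu$ and $f^{-1}[(a,+\infty)]\in\mu$ for every $a\in\mathbb{R}$ (and $f^{-1}[\mathbb{R}]=X\in\mu$). One establishes the two equivalences
\[
 f(x)<a\ \Leftrightarrow\ (\exists r\in\mathbb{Q}\cap(0,1))\,(r<a\ \wedge\ x\in U_r)
 \qquad\text{and}\qquad
 f(x)>a\ \Leftrightarrow\ (\exists r\in\mathbb{Q}\cap(0,1))\,(r>a\ \wedge\ x\notin\cl_{\mu}(U_r)),
\]
valid for $0<a\leq 1$ and for $0\leq a<1$ respectively. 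In the first, ``$\Leftarrow$'' is immediate from $f(x)\leq r$, and ``$\Rightarrow$'' uses $f(x)<1$. In the second, ``$\Rightarrow$'' picks rationals with $a<r<s<f(x)$, notes $x\notin U_s$, and applies (i) in the form $\cl_{\mu}(U_r)\subseteq U_s$; for ``$\Leftarrow$'', if $x\notin\cl_{\mu}(U_r)$ then (i) forces $U_t\subseteq U_r\subseteq\cl_{\mu}(U_r)$ for $t<r$, so $x\notin U_t$ for all $t\leq r$ and hence $f(x)\geq r>a$. Consequently, for $0<a\leq 1$ one has $f^{-1}[(-\infty,a)]=\bigcup\{U_r:r<a\}$, and for $0\leq a<1$ one has $f^{-1}[(a,+\infty)]=\bigcup\{X\setminus\cl_{\mu}(U_r):r>a\}$, each a union of $\mu$-open sets, hence $\mu$-open; in the remaining ranges of $a$ these preimages are $\emptyset$ or $X$. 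Thus $f$ is $\langle\mu,g\tau_n\rangle$-continuous and $\mathbf{GUL(X)}$ holds.

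For the implication ($\Rightarrow$), let $A,B$ be non-empty disjoint $\mu$-closed sets and let $f:X\to\mathbb{R}$ be a $\langle\mu,g\tau_n\rangle$-continuous function with $A\subseteq f^{-1}[\{0\}]$, $B\subseteq f^{-1}[\{1\}]$, supplied by $\mathbf{GUL(X)}$. Set $U_r=f^{-1}[(-\infty,r)]$ for $r\in\mathbb{Q}\cap(0,1)$; since $(-\infty,r)\in g\tau_n$, each $U_r\in\mu$, and $f[A]=\{0\}$ gives (ii). The key observation is that for every real $t$ the set $f^{-1}[(-\infty,t]]$ is $\mu$-closed, being the complement of $f^{-1}[(t,+\infty)]\in\mu$. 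Hence, given $r<s$ in $\mathbb{Q}\cap(0,1)$, choosing a real $t$ with $r<t<s$ yields $U_r\subseteq f^{-1}[(-\infty,t]]\subseteq U_s$, so $\cl_{\mu}(U_r)\subseteq f^{-1}[(-\infty,t]]\subseteq U_s$, which is (i); and choosing a real $s\in(r,1)$ yields $U_r\subseteq f^{-1}[(-\infty,s]]$, a $\mu$-closed set disjoint from $B$ because $f[B]=\{1\}$, so $\cl_{\mu}(U_r)\subseteq f^{-1}[(-\infty,s]]\subseteq X\setminus B$, which is (iii).

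I expect the main obstacle to be the continuity verification in the ($\Leftarrow$) direction: getting the two order-theoretic equivalences exactly right, in particular being careful about the boundary values $a\le 0$ and $a\ge 1$ where the closed-form descriptions of the preimages break down, and making sure condition (i) is invoked correctly (both as $\cl_{\mu}(U_r)\subseteq U_s$ and, downward, as $U_t\subseteq U_r$ for $t<r$). Everything else is routine bookkeeping, and it is worth remarking explicitly that in each direction the witnessing object — the function $f$ or the family $\{U_r\}$ — is defined by an outright formula from the given data, so the whole argument is carried out in $\mathbf{ZF}$ with no appeal to choice.
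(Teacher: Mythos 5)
Your proof is correct and follows essentially the same route as the paper: the sufficiency direction defines $f(x)=\inf\{r:x\in U_r\}$ (your convention $\inf\emptyset=1$ matches the paper's extension of the family by $U_1=X\setminus B$ and $U_r=X$ for $r>1$), and the necessity direction takes $U_r=f^{-1}[(-\infty,r)]$ exactly as in the paper. The only difference is that you write out in full the continuity verification and the degenerate cases that the paper delegates to ``the standard proof of Urysohn's lemma'' and to Cs\'asz\'ar's Theorem 3.3, and your details check out.
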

\begin{proof} Assume that $A,B$ is a pair of non-empty disjoint $\mu$-closed sets.

\emph{Sufficiency.} Suppose that $\{U_r: r\in\mathbb{Q}\cap (0,1)\}$ is a family of $\mu$-open sets satisfying conditions (i)--(iii). For every $r\in\mathbb{Q}$ such that $r\leq 0$, we put $U_r=\emptyset$. For every $r\in\mathbb{Q}$ such that $r>1$, we put $U_r=X$. We also put $U_1=X\setminus B$. Then, as in the standard proof of Urysohn's lemma, we define a function $f:X\to\mathbb{R}$ by putting $f(x)=\inf\{r\in\mathbb{Q}: x\in U_r\}$. Arguing in much the same way, as in the proof to Theorem 3.3 in \cite{cs1}, one can check that the function $f$ is $\langle \mu, g\tau_n\rangle$-continuous, $A\subseteq f^{-1}[\{0\}]$ and $B\subseteq f^{-1}[\{1\}]$.

\emph{Necessity.} Suppose that $g:X\to\mathbb{R}$ is a $\langle\mu, g\tau_n\rangle$-continuous function such that $A\subseteq g^{-1}[\{0\}]$ and $B\subseteq g^{-1}[\{1\}]$. Then, for every $r\in\mathbb{Q}\cap(0,1)$, we define $U_r=g^{-1}[(-\infty, r)]$. 
\end{proof}

That it holds in $\mathbf{ZF+DC}$ that every normal topological space satisfies Urysohn's Lemma is shown in \cite[Problem 2.26]{je} (see also \cite[p. 339]{hr}) but without any detailed proof. Therefore, let us sketch a proof to the following more general theorem for completeness.

\begin{theorem}
\label{s3t2} 
$[\mathbf{ZF}]$ $\mathbf{DC}$ implies that, for every normal GT space $\mathbf{X}$, $\mathbf{GUL(X)}$ holds.
\end{theorem}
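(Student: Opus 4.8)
The plan is to verify the criterion of Proposition~\ref{s3p1}: given a pair $A,B$ of non-empty disjoint $\mu$-closed sets in a normal GT space $\mathbf{X}=\langle X,\mu\rangle$, we must produce a family $\{U_r:r\in\mathbb{Q}\cap(0,1)\}$ of $\mu$-open sets satisfying conditions (i)--(iii). Fix once and for all an enumeration $\mathbb{Q}\cap(0,1)=\{q_n:n\in\omega\}$ (this is a choice-free enumeration of a concrete countable set) and build the sets $U_{q_n}$ by recursion on $n$, at stage $n$ choosing $U_{q_n}$ so that for all $m<n$ the coherence requirement ``$q_m<q_n\Rightarrow\cl_\mu(U_{q_m})\subseteq U_{q_n}$'' and ``$q_n<q_m\Rightarrow\cl_\mu(U_{q_n})\subseteq U_{q_m}$'' holds, together with $A\subseteq U_{q_n}$ and $\cl_\mu(U_{q_n})\subseteq X\setminus B$. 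The existence of a suitable $U_{q_n}$ at each stage is exactly what normality gives: writing $q_i$ for the largest already-placed rational below $q_n$ (or using $A$, which plays the role of a set with ``closure'' $A$ since $A$ is closed, and noting $A\subseteq X\setminus B$ with $X\setminus B$ open) and $q_j$ for the smallest already-placed rational above $q_n$ (or using $X\setminus B$ in the role of the outer open set), we have a $\mu$-closed set $C:=\cl_\mu(U_{q_i})\cup A$ contained in the $\mu$-open set $D:=U_{q_j}$ (with $D:=X\setminus B$ if there is no such $q_j$); applying normality to the disjoint $\mu$-closed sets $C$ and $X\setminus D$ yields a $\mu$-open $U$ with $C\subseteq U$ and $\cl_\mu(U)\subseteq D$, and we set $U_{q_n}:=U$.

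The role of $\mathbf{DC}$ is to legitimize this recursion: the naive ``define $U_{q_n}$ by recursion'' requires, at each stage, selecting one witness from the non-empty set of admissible $\mu$-open sets, and there is no canonical choice, so a sequence of such selections cannot be justified in plain $\mathbf{ZF}$. I would phrase this cleanly by letting $A_n$ be the set of all \emph{finite partial assignments} $\langle V_0,\dots,V_{n-1}\rangle$ of $\mu$-open sets indexed by $q_0,\dots,q_{n-1}$ that satisfy the coherence conditions (i)--(iii) restricted to indices $<n$, letting $A=\bigcup_{n\in\omega}A_n$ (non-empty, as $A_0$ contains the empty sequence), and defining the relation $S$ on $A$ by: $\langle s,t\rangle\in S$ iff $t$ extends $s$ by exactly one more coordinate. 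The normality argument of the previous paragraph shows $(\forall s\in A)(\exists t\in A)\,\langle s,t\rangle\in S$, so $\mathbf{DC}$ furnishes a sequence $s_0\subseteq s_1\subseteq\cdots$ along $S$; by an easy induction the length of $s_n$ is strictly increasing, hence $\ge n$, so $\bigcup_n s_n$ is a function defined on all of $\{q_n:n\in\omega\}=\mathbb{Q}\cap(0,1)$, and it satisfies (i)--(iii) because every finite restriction does.

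Once the family $\{U_r:r\in\mathbb{Q}\cap(0,1)\}$ is in hand, Proposition~\ref{s3p1} (sufficiency direction) immediately gives a $\langle\mu,g\tau_n\rangle$-continuous $f:X\to\mathbb{R}$ with $A\subseteq f^{-1}[\{0\}]$ and $B\subseteq f^{-1}[\{1\}]$, i.e.\ $\mathbf{GUL(X)}$ holds; so no separate continuity computation is needed here. I expect the only real obstacle to be bookkeeping: making the inductive step genuinely valid when the set of already-placed rationals below (resp.\ above) $q_n$ is empty — one must handle the ``boundary'' cases by substituting $A$ for the inner closed set and $X\setminus B$ for the outer open set — and, more importantly, being careful that the application of $\mathbf{DC}$ is to a relation on a genuine \emph{set} (the set of finite admissible sequences, which lives inside $[\,\mu\,]^{<\omega}$ suitably coded, hence is a set in $\mathbf{ZF}$) rather than on a proper class. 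Both points are routine but worth stating explicitly, since the whole content of the theorem is that the classical argument goes through \emph{with $\mathbf{DC}$ in place of full $\mathbf{AC}$}.
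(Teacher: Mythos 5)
Your proposal is correct and follows essentially the same route as the paper: both reduce to the criterion of Proposition~\ref{s3p1} and then apply $\mathbf{DC}$ to the set of finite admissible sequences of $\mu$-open sets (indexed by an enumeration of $\mathbb{Q}\cap(0,1)$) under the extension relation, with normality guaranteeing that every such finite sequence extends. Your write-up is in fact somewhat more explicit than the paper's about the normality step and the boundary cases (note only that your set $C=\cl_{\mu}(U_{q_i})\cup A$ equals $\cl_{\mu}(U_{q_i})$ since $A\subseteq U_{q_i}$, which is why it is $\mu$-closed even though finite unions of closed sets need not be closed in a GT space).
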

\begin{proof}
Let $\mathbf{X}=\langle X, \mu\rangle$ be a strong normal GT space. Suppose that $A,B$ is a pair of non-empty disjoint $\mu$-closed sets. Fix a bijection $\psi:\omega\to\mathbb{Q}\cap(0,1)$. For every $n\in\omega$, let $\mathcal{U}_n$ be a family of all finite sequences $(U_{\psi(i)})_{i\in n+1}$ of $\mu$-open sets such that:
\begin{enumerate}
\item[(i)] for all $i,j\in n+1$, if $\psi(i)<\psi(j)$, then $\cl_{\mu}(U_{\psi(i)})\subseteq U_{\psi(j)}$;
\item[(ii)] for every $i\in n+1$, $A\subseteq U_{\psi(i)}$;
\item[(iii)] for every $i\in n+1$, $\cl_{\mu}(U_{\psi(i)})\subseteq X\setminus B$.
\end{enumerate}
It follows from the normality of $\mathbf{X}$ that, for every $n\in\omega$, $\mathcal{U}_n\neq\emptyset$. Let $\mathcal{U}=\bigcup\limits_{n\in\omega}\mathcal{U}_n$. We define a binary relation $R$ on $\mathcal{U}$ as follows: if $V_1\in\mathcal{U}_n$ with $V_1=(U(1)_{\psi(i)})_{ i\in n+1}$, and  $V_2\in\mathcal{U}_m$ with $V_2=(U(2)_{\psi(i)})_{ i\in m+1}$,  then $\langle V_1,V_2\rangle\in R$ if and only if $n<m$ and, for every $i\in n+1$, $U(1)_{\psi(i)}=U(2)_{\psi(i)}$. It follows from the normality of $\mathbf{X}$ that, for every $n\in\omega$ and every $V\in\mathcal{U}_n$, there exists $W\in\mathcal{U}_{n+1}$ such that $\langle V,W\rangle\in R$. Assuming $\mathbf{DC}$, we may fix $V\in\mathcal{U}^{\omega}$ such that, for every $n\in\omega$, $\langle V(n), V(n+1)\rangle\in R$. Using $V$, one can easily define a family $\{U_r: r\in\mathbb{Q}\cap(0,1)\}$ of $\mu$-open sets satisfying conditions (i)--(iii) of Proposition \ref{s3p1}. Hence $\mathbf{GUL(X)}$ holds in $\mathbf{ZF+DC}$.
\end{proof}

The notion of an effectively normal topological space was introduced in \cite{M} (see also \cite[Note 71]{hr} and \cite{hkrr} for a definition of an effectively normal space).  It is known from \cite{hkrr} that every effectively normal topological space satisfies Urysohn's lemma in $\mathbf{ZF}$. Let us adopt the concept of effective normality to generalized topological spaces.

\begin{definition}
\label{s3d3}
Let $\mathbf{X}=\langle X, \mu\rangle$ be a GT space and let  
$$\mathcal{E}(\mathbf{X})=\{\langle A, B\rangle: A,B \text{ are $\mu$-closed and } A\cap B=\emptyset\},$$
$$\mathcal{O}(\mathbf{X})=\{\langle U,V\rangle: U,V \text{ are $\mu$-open and } U\cap V=\emptyset\}.$$ 
We say that $\mathbf{X}$ is \emph{effectively normal} if there exists a function $F:\mathcal{E}(\mathbf{X})\to\mathcal{O}(\mathbf{X})$ such that, for every $\langle A, B\rangle\in\mathcal{E}(\mathbf{X})$, if $F(\langle A, B\rangle)=\langle U, V\rangle$, then $A\subseteq U$ and $B\subseteq V$.
\end{definition}

\begin{theorem}
\label{s3t4}
$[\mathbf{ZF}]$ For every effectively normal GT space $\mathbf{X}$, $\mathbf{GUL(X)}$ holds.
\end{theorem}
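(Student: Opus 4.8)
The plan is to adapt the classical Urysohn construction, using the normality-selecting function $F$ to eliminate all choices, so that the entire family $\{U_r : r \in \mathbb{Q} \cap (0,1)\}$ is produced by an explicit recursion rather than by $\mathbf{DC}$. By Proposition \ref{s3p1}, it suffices, given a pair $\langle A, B\rangle \in \mathcal{E}(\mathbf{X})$ of non-empty disjoint $\mu$-closed sets, to construct a collection of $\mu$-open sets satisfying conditions (i)--(iii) of that proposition; and this must be done uniformly (i.e., without choice) from $A$ and $B$.

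First I would record the basic ``shrinking'' consequence of effective normality: if $C$ is $\mu$-closed, $W$ is $\mu$-open and $C \subseteq W$, then $\langle C, X \setminus W\rangle \in \mathcal{E}(\mathbf{X})$, so applying $F$ and taking the first coordinate yields a canonical $\mu$-open set $G(C,W)$ with $C \subseteq G(C,W)$ and $\cl_\mu(G(C,W)) \subseteq W$ (the closure being contained in $W$ because the second coordinate $V$ of $F(\langle C, X\setminus W\rangle)$ is a $\mu$-open set disjoint from $G(C,W)$ and containing $X \setminus W$, whence $X \setminus V$ is $\mu$-closed, contains $G(C,W)$, and is contained in $W$). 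This gives a definable operation $G$ doing one step of interpolation. Then, fixing the bijection $\psi : \omega \to \mathbb{Q} \cap (0,1)$ as in Theorem \ref{s3t2}, I would define by recursion on $n$ a single finite sequence $(U^{(n)}_{\psi(i)})_{i \in n+1}$ extending the previous one: having placed $U_{\psi(0)}, \dots, U_{\psi(n-1)}$, let $r = \psi(n)$, let $p = \max\{\psi(i) : i < n,\ \psi(i) < r\}$ (or interpret the relevant set as $A$ with ``closure'' $A$ if there is no such $i$), and $q = \min\{\psi(i) : i < n,\ \psi(i) > r\}$ (or interpret the relevant open set as $X \setminus B$ if there is no such $i$), and set $U_r = G(\cl_\mu(U_p), U_q)$ — more precisely $G(\cl_\mu(U_p) \cup A,\ U_q)$ to guarantee $A \subseteq U_r$. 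Because at each stage the choice of $U_r$ is the value of the fixed function $G$ on already-determined arguments, the recursion needs no choice, and $\{U_{\psi(n)} : n \in \omega\} = \{U_r : r \in \mathbb{Q}\cap(0,1)\}$ satisfies (i)--(iii).

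The routine verifications I would then carry out are: that the inductive hypotheses (the ordering condition (i) restricted to the indices placed so far, together with $A \subseteq U_{\psi(i)}$ and $\cl_\mu(U_{\psi(i)}) \subseteq X \setminus B$) are preserved at each step — this is exactly the standard dense-order interpolation bookkeeping — and that the resulting assignment $r \mapsto U_r$ is genuinely a function on all of $\mathbb{Q}\cap(0,1)$ (it is, since every rational in $(0,1)$ equals $\psi(n)$ for a unique $n$, and $U_{\psi(n)}$ is fixed once and for all at stage $n$). Finally I would invoke the sufficiency direction of Proposition \ref{s3p1} to conclude $\mathbf{GUL(X)}$.

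The main obstacle is a bookkeeping subtlety rather than a conceptual one: I must make sure the recursion is set up so that $U_{\psi(n)}$ depends only on data available at stage $n$ and never gets revised later — in the classical proof one often phrases the construction as ``choose any interpolating open set,'' but here every such act must be replaced by the deterministic operation $G$ coming from $F$, and one has to check that using $G$ (rather than an arbitrary interpolant) does not break the monotonicity condition (i) for pairs of indices that are both placed at later stages. This works because (i) only ever needs to be checked between $U_r$ and the $U_p, U_q$ that were used to define it, and transitivity of $\subseteq$ (via $\cl_\mu(U_p) \subseteq U_p$... more precisely $U_p \subseteq \cl_\mu(U_p)$ and $\cl_\mu(U_r) \subseteq U_q$) propagates the rest; so the construction is safe, but writing the recursion carefully enough to make this transparent is the part that requires attention.
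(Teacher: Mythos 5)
Your proposal is correct and is essentially the paper's proof (the paper only sketches it): replace every appeal to normality in the standard Urysohn recursion by the canonical interpolant extracted from $F$, so the whole family $\{U_r\}$ of Proposition \ref{s3p1} is defined by an explicit recursion along $\psi$ with no choice. One small repair: drop the ``$\cup A$'' in $G(\cl_\mu(U_p)\cup A, U_q)$, since in a generalized topology a union of two $\mu$-closed sets need not be $\mu$-closed (so $F$ might not be applicable to that pair); it is also unnecessary, because the inductive invariant $A\subseteq U_p\subseteq\cl_\mu(U_p)$ already yields $A\subseteq G(\cl_\mu(U_p),U_q)$, and at a stage with no predecessor you apply $G$ to the $\mu$-closed set $A$ itself.
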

\begin{proof}
Let $\mathbf{X}=\langle X, \mu\rangle$ be an effectively normal GT space. Let $F:\mathcal{E}(\mathbf{X})\to\mathcal{O}(\mathbf{X})$ be a function such that, for every $\langle A, B\rangle\in\mathcal{E}(\mathbf{X})$, if $F(\langle A, B\rangle)=\langle U, V\rangle$, then $A\subseteq U$ and $B\subseteq V$. Let $\psi:\omega\to\mathbb{Q}\cap (0,1)$ be a bijection. Given a pair $A,B$ of non-empty disjoint $\mu$-closed sets,  we can mimic the standard $\mathbf{ZFC}$-proof of Urysohn's Lemma and, by using $F$, we can inductively define in $\mathbf{ZF}$ a family $\{U_r: r\in\mathbb{Q}\cap(0,1)\}$ of $\mu$-open sets satisfying conditions (i)--(iii) of Proposition \ref{s3p1}. Hence $\mathbf{GUL(X)}$ is true in $\mathbf{ZF}$ by Proposition \ref{s3p1}.  
\end{proof}

\begin{proposition}
\label{s3p5}
$[\mathbf{ZF}]$ The space $\mathbf{R}$ is effectively normal.
\end{proposition}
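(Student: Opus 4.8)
The plan is to write down an explicit $\mathbf{ZF}$-definable function $F\colon\mathcal{E}(\mathbf{R})\to\mathcal{O}(\mathbf{R})$ witnessing effective normality, reading off everything from the description of the $g\tau_n$-closed sets recalled at the beginning of Section~\ref{s2}. Recall that every non-empty $g\tau_n$-closed set different from $\mathbb{R}$ is order-convex and has one of the forms $(-\infty,a]$, $[a,+\infty)$, $[a,b]$; hence for such a set $C$ the extended reals $\sup C$ and $\inf C$ are well-defined with no appeal to choice, and they belong to $C$ whenever they are real.

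First I would handle the degenerate cases: if $A=\emptyset$ put $F(\langle A,B\rangle)=\langle\emptyset,\mathbb{R}\rangle$, and if $A\neq\emptyset=B$ put $F(\langle A,B\rangle)=\langle\mathbb{R},\emptyset\rangle$; in each case both coordinates are disjoint $g\tau_n$-open sets containing $A$ and $B$ respectively. So assume $A,B$ are non-empty and disjoint; then neither equals $\mathbb{R}$, hence both are order-convex proper closed sets. The key elementary observation --- essentially the one used just before Lemma~\ref{s2lem1} to see that $\mathbf{R}$ is normal --- is that two disjoint non-empty order-convex subsets of $\mathbb{R}$ are linearly separated: either every point of $A$ lies strictly below every point of $B$, or conversely. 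I would give the one-line argument: if $a_1>b_1$ and $a_2<b_2$ for some $a_1,a_2\in A$ and $b_1,b_2\in B$, then order-convexity of $B$ (when $a_1\le b_2$) or of $A$ (when $a_1>b_2$) produces a point of $A\cap B$, a contradiction. Consequently exactly one of the following holds, and which one is decided by a $\mathbf{ZF}$-formula in $A,B$: (L) $A$ is bounded above, $B$ is bounded below and $\sup A<\inf B$; (R) $B$ is bounded above, $A$ is bounded below and $\sup B<\inf A$.

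Then I would set, in case (L), $c=\tfrac12(\sup A+\inf B)$ and $F(\langle A,B\rangle)=\langle(-\infty,c),(c,+\infty)\rangle$, and in case (R), $c=\tfrac12(\sup B+\inf A)$ and $F(\langle A,B\rangle)=\langle(c,+\infty),(-\infty,c)\rangle$. In case (L) each coordinate is $g\tau_n$-open and the two are disjoint, and from $A\subseteq(-\infty,\sup A]$, $B\subseteq[\inf B,+\infty)$ together with $\sup A<c<\inf B$ one gets $A\subseteq(-\infty,c)$ and $B\subseteq(c,+\infty)$, as required; case (R) is symmetric. Since the order relation on $\mathbb{R}$, the operations $\sup$, $\inf$ and the midpoint, and the above case distinction are all expressible in $\mathbf{ZF}$, the function $F$ is a genuine $\mathbf{ZF}$ object, so $\mathbf{R}$ is effectively normal.

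There is no real obstacle here, only a point to be careful about: the separating number $c$ must be produced by a formula rather than by an unjustified selection, which is precisely why one takes the symmetric midpoint of the canonical endpoints $\sup A,\inf B$ (and the dual choice in case (R)) instead of an arbitrary $c\in\mathbb{R}\setminus(A\cup B)$. Everything else is a routine verification against the explicit lists of $g\tau_n$-open and $g\tau_n$-closed sets.
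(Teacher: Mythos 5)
Your proof is correct, and it follows the same overall scheme as the paper's: handle the degenerate cases where $A$ or $B$ is empty, observe that two non-empty disjoint $g\tau_n$-closed sets are linearly separated, and then produce a \emph{canonical} separating point $c$ so that $F(\langle A,B\rangle)$ can be taken to be the pair of rays $\langle(-\infty,c),(c,+\infty)\rangle$ (in the appropriate order). The only genuine difference is the device used to make $c$ canonical: the paper fixes a bijection $\psi:\omega\to\mathbb{Q}$ and takes the rational $\psi(i)$ of least index $i$ that separates $A$ from $B$, whereas you take the midpoint $\tfrac12(\sup A+\inf B)$ (resp.\ $\tfrac12(\sup B+\inf A)$), using that the proper non-empty $g\tau_n$-closed sets are exactly the closed order-convex sets, so that $\sup A$ and $\inf B$ exist, lie in $A$ and $B$ respectively, and hence satisfy $\sup A<\inf B$ by disjointness. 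Your device is slightly more self-contained (no enumeration of $\mathbb{Q}$ is needed) but leans harder on the explicit classification of $g\tau_n$-closed sets and the order-completeness of $\mathbb{R}$; the paper's least-index trick is the more generic template, applicable whenever one has a fixed countable family of candidate separators. Both are fully choice-free, and your verification of the separation dichotomy and of the inclusions $A\subseteq(-\infty,c)$, $B\subseteq(c,+\infty)$ is sound.
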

\begin{proof}
Let $\psi:\omega\to \mathbb{Q}$ be a bijection. For every $i\in\omega$, let $U_i=(-\infty, \psi(i))$ and $V_i=(\psi(i), +\infty)$.  To define a mapping $F:\mathcal{E}(\mathbf{R})\to\mathcal{O}(\mathbf{R})$ showing the effective normality of $\mathbf{R}$, for a fixed pair $A,B$ of non-empty disjoint $g\tau_n$-closed sets, we consider the set $N(A,B)=\{i\in\omega: A\subseteq U_i\text{ and }B\subseteq V_i\text{ or } A\subseteq V_i\text{ and }B\subseteq U_i\}$. Clearly, $N(A,B)\neq\emptyset$, so we may define $n(A,B)=\min N(A,B)$ and
$$
F(\langle A, B\rangle)=\begin{cases} \langle U_{n(A,B)}, V_{n(A,B)}\rangle &\text{if } A\subseteq U_{n(A,B)};\\
\langle V_{n(A,B)}, U_{n(A,B)}\rangle &\text{otherwise.}
\end{cases}
$$
If $A,B$ is a pair of $g\tau_n$-closed sets such that either $A$ or $B$ is empty, we put 
$$
F(\langle A, B\rangle)=\begin{cases} \langle \emptyset, \mathbb{R}\rangle &\text{if } A=\emptyset;\\
\langle \mathbb{R}, \emptyset\rangle &\text{if } B=\emptyset.
\end{cases}
$$
\end{proof}

\section{Conditions under which $\mathbf{UL(X)}$ holds}
\label{s4}

Reasonable necessary and sufficient conditions for a GT space  to satisfy $\mathbf{UL(X)}$ are more complicated than the ones to satisfy $\mathbf{GUL(X)}$.

\begin{proposition} 
\label{s4p1}
$[\mathbf{ZF}]$ Let $\mathbf{X}=\langle X, \mu\rangle$ be a GT space. Then, for every pair $A,B$ of non-empty  subsets of $X$, the following conditions are equivalent:
\begin{enumerate}
\item[(a)] there exists a $\langle\mu,\tau_n\rangle$-continuous function $f: X\to\mathbb{R}$ such that $A\subseteq f^{-1}[\{0\}]$ and $B\subseteq f^{-1}[\{1\}]$;
\item[(b)] there exists a family of ordered pairs $\{\langle U_r, F_r\rangle: r\in\mathbb{Q}\cap(0,1)\}$ which satisfies the following conditions:
\begin{enumerate}
\item[(i)] for every $r\in\mathbb{Q}\cap (0,1)$, $U_r$ is a $\mu$-open set, $F_r$ is a $\mu$-closed set and $A\subseteq U_r\subseteq F_r$;
\item[(ii)] for every pair $s,r\in\mathbb{Q}\cap (0,1)$, if $s<r$, then $F_s\subseteq U_r$ and $U_r\setminus F_s\in\mu$;
\item [(iii)] for every $r\in\mathbb{Q}\cap (0,1)$, $F_r\cap B=\emptyset$.
\end{enumerate}
\end{enumerate}
\end{proposition}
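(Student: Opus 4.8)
The plan is to prove the two implications separately; the only genuine subtlety is that the target $\mathbb{R}$ carries the ordinary topology $\tau_n$, so preimages of \emph{bounded} open intervals must be $\mu$-open, whereas $\mu$ need not be stable under finite intersections. For (a)$\Rightarrow$(b), given a $\langle\mu,\tau_n\rangle$-continuous $f$ with $A\subseteq f^{-1}[\{0\}]$ and $B\subseteq f^{-1}[\{1\}]$, I would simply put $U_r=f^{-1}[(-\infty,r)]$ and $F_r=f^{-1}[(-\infty,r]]$ for $r\in\mathbb{Q}\cap(0,1)$. Since $r$ is rational, the sets $(-\infty,r)$, $(r,+\infty)$ and $(s,r)$ (for $s<r$) all lie in $\tau_n$, so continuity yields $U_r\in\mu$, $X\setminus F_r=f^{-1}[(r,+\infty)]\in\mu$ (hence $F_r$ is $\mu$-closed), and $U_r\setminus F_s=f^{-1}[(s,r)]\in\mu$. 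The inclusions $A\subseteq U_r\subseteq F_r$, $F_s\subseteq U_r$ for $s<r$, and $F_r\cap B=\emptyset$ are immediate from $0<s<r<1$ together with the hypotheses on $f$. This direction is routine.

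For (b)$\Rightarrow$(a) I would first record a point that is easy to overlook: since $\mathbb{R}\in\tau_n$ and $f^{-1}[\mathbb{R}]=X$, a $\langle\mu,\tau_n\rangle$-continuous real-valued function can exist only if $X\in\mu$ --- and this is guaranteed by (b), because fixing any $s<r$ in $\mathbb{Q}\cap(0,1)$ we have $F_s\subseteq U_r$ by (ii), whence $X=(X\setminus F_s)\cup U_r\in\mu$. Then, in Urysohn fashion, I would define
$$f(x)=\inf\bigl(\{1\}\cup\{r\in\mathbb{Q}\cap(0,1): x\in U_r\}\bigr).$$
From (i) one reads off that $0\le f\le1$, that every $x\in A$ lies in all the $U_r$ so $f(x)=0$, and that every $x\in B$ lies in none of the $U_r$ (by (i) and (iii)) so $f(x)=1$; combining (i) and (ii) gives the nesting $U_r\subseteq F_r\subseteq U_s\subseteq F_s$ whenever $r<s$, so in particular $U_r\setminus F_s=\emptyset$ once $s\ge r$.

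The substantive part is continuity, which --- the bounded rational intervals being a base of $\tau_n$ --- reduces to showing $f^{-1}[(a,b)]\in\mu$ for all $a,b\in\mathbb{Q}$ with $a<b$. Using the definition of $f$ and the nesting above, I would verify the one-sided identities $\{x:f(x)<b\}=\bigcup\{U_r: r\in\mathbb{Q}\cap(0,1),\ r<b\}$ when $b\le1$ (and $=X$ when $b>1$), and $\{x:f(x)>a\}=\bigcup\{X\setminus F_s: s\in\mathbb{Q}\cap(0,1),\ s>a\}$ when $a\ge0$ (and $=X$ when $a<0$); each right-hand side is $\mu$-open, since $X\setminus F_s\in\mu$ ($F_s$ being $\mu$-closed) and $X\in\mu$. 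For $0\le a<b\le1$, intersecting the two gives
$$f^{-1}[(a,b)]=\bigcup\{\,U_r\setminus F_s:\ r,s\in\mathbb{Q}\cap(0,1),\ a<s<r<b\,\},$$
a union of $\mu$-open sets by (ii) (the pairs with $s\ge r$ contribute nothing), while whenever $(a,b)$ extends beyond $0$ or $1$ the preimage collapses to one of $\emptyset$, $\{x:f(x)<b\}$, $\{x:f(x)>a\}$, or $X$, all already shown $\mu$-open. This completes (b)$\Rightarrow$(a).

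I expect the displayed formula for $f^{-1}[(a,b)]$ to be the crux: naively this preimage is an \emph{intersection} of two $\mu$-open sets, which in a general GT space need not be $\mu$-open, so the argument would break down were it not for the hypothesis $U_r\setminus F_s\in\mu$ in condition (ii), which is exactly what lets one rewrite it as a \emph{union} of ``annular'' open sets. The secondary subtlety is remembering to extract $X\in\mu$ from (b); without it no real-valued function on $\mathbf{X}$ is $\langle\mu,\tau_n\rangle$-continuous at all, and the stated equivalence would fail.
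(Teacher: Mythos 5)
Your proof is correct and follows essentially the same route as the paper: the same choice $U_r=f^{-1}[(-\infty,r)]$, $F_r=f^{-1}[(-\infty,r]]$ for (a)$\Rightarrow$(b), and the same Urysohn-style infimum function for (b)$\Rightarrow$(a), where the paper merely says the continuity check is ``as in the standard proof'' while you spell out the decisive identity $f^{-1}[(a,b)]=\bigcup\{U_r\setminus F_s: a<s<r<b\}$ that makes condition (ii) do its work. Your side remark that (b) forces $X\in\mu$ is a correct and worthwhile detail the paper leaves implicit.
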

\begin{proof}
$(a)\rightarrow (b)$ Given a $\langle \mu, \tau_n\rangle$-continuous function $f: X\to\mathbb{R}$ such that $A\subseteq f^{-1}[\{0\}]$ and $B\subseteq f^{-1}[\{1\}]$, for every $r\in\mathbb{Q}\cap (0,1)$, we can define $U_r=f^{-1}[(-\infty, r)]$ and $F_r=f^{-1}[(-\infty, r]]$. 

$(b)\rightarrow (a)$ Suppose that for a pair $A,B$ of non-empty subsets of $X$, we a given a family $\{\langle U_r, F_r\rangle: r\in\mathbb{Q}\cap(0,1)\}$ which satisfies conditions (i)--(iii). For every non-negative rational number $r$, we define $U_r=F_r=\emptyset$. For every rational number $r\geq 1$, we define $U_r=F_r=X$. We define a function $f:X\to\mathbb{R}$ by putting $f(x)=\inf\{r\in\mathbb{Q}: x\in U_r\}$. As in the standard proof of Urysohn's lemma, one can check that it follows from $(ii)$ that $f$ is $\langle \mu, \tau_n\rangle$-continuous. Clearly, $A\subseteq f^{-1}[\{0\}]$ and $B\subseteq f^{-1}[\{1\}]$.
\end{proof}

\begin{definition}
\label{s4d1}
We say that a GT space $\mathbf{X}=\langle X, \mu\rangle$ is \emph{U-normal} if, for every pair $A,B$ of non-empty disjoint $\mu$-closed sets and for every $n\in\omega$, there exists a family $\{\langle U_i, F_i\rangle : i\in n+1\}$ which satisfies the following conditions:
\begin{enumerate}
\item[(i)] for every $i\in n$, $A\subseteq U_i\subseteq F_i\subseteq U_{i+1}\subseteq F_{i+1}\subseteq X\setminus B$;
\item [(ii)] for every pair $i,j\in n+1$ such that $i\in j$, $U_j\setminus F_i\in\mu$;
\item[(iii)] for every $i\in n+1$, there exist a $\mu$-open set $U$ and a $\mu$-closed set $F$ such that $U\subseteq F$ and if $i=0$, that $F\subseteq U_0$, if $i=n$, then $F_n\subseteq U$, if $0\in i\in n$, then $F_i\subseteq U$ and $F\subseteq U_{i+1}$ and, moreover, for every $j\in n+1$, if $F\subseteq U_j$, then $U_j\setminus F\in \mu$, and if $F_j\subseteq U$, then $U\setminus F_j\in\mu$.
\end{enumerate}
\end{definition}

\begin{definition} Let $\mathbf{X}=\langle X, \mu\rangle$ be a U-normal GT-space. Assume that  $A,B$ is a pair of dsjoint non-empty $\mu$-closed sets. 
\begin{enumerate}
\item[(a)] Let $n\in\omega$ and let $S$ be a set which has exactly $n+1$ elements. A family $\mathcal{U}=\{\langle U_s, F_s\rangle: s\in S\}$ will be called a \emph{U-family of length} $n+1$ for the pair $\langle A, B\rangle$ if there is a bijection $\psi: n+1\to S$ such that the family $\{\langle U_{\psi(i)},F_{\psi(i)}\rangle :i\in n+1\}$ satisfies conditions (i)-(iii) of Definition \ref{s4d1}.
\item[(b)] Suppose that $m,n \in\omega$, $\mathcal{U}=\{\langle U_s, F_s\rangle: s\in S\}$ is a U-family of length $n+1$ for $\langle A,B\rangle$, and $\mathcal{V}=\{\langle V_t, H_t\rangle: t\in T\}$ is a U-family of length $m+1$ for $\langle A,B\rangle$. Then we say that $\mathcal{U}$ is an \emph{extension} of $\mathcal{V}$ if $m\in n$,  $T\subseteq S$ and, for every $t\in T$, $U_t=V_t$ and $F_t= H_t$.
\end{enumerate}
\end{definition}

\begin{theorem}
\label{s4t1} 
It holds in $\mathbf{ZF}$ that $\mathbf{DC}$ implies that every U-normal GT space $\mathbf{X}$ satisfies $\mathbf{UL(X)}$.
\end{theorem}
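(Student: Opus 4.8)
The plan is to derive $\mathbf{UL(X)}$ from Proposition \ref{s4p1}: for a pair $A,B$ of non-empty disjoint $\mu$-closed sets it suffices to produce a family $\{\langle U_r,F_r\rangle:r\in\mathbb{Q}\cap(0,1)\}$ satisfying conditions (i)--(iii) of Proposition \ref{s4p1}(b), and the $\omega$ many choices needed to assemble such a family will be supplied by $\mathbf{DC}$. One may assume $\mathbf{X}$ is strong (otherwise there is no pair of disjoint non-empty $\mu$-closed sets and $\mathbf{UL(X)}$ is vacuous), and the cases with $A=\emptyset$ or $B=\emptyset$ are settled by a constant function. Fix a bijection $\psi:\omega\to\mathbb{Q}\cap(0,1)$ and put $S_n=\{\psi(0),\dots,\psi(n)\}$, so that $S_0\subseteq S_1\subseteq\cdots$ and $\bigcup_nS_n=\mathbb{Q}\cap(0,1)$.

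Next I would build the tree for $\mathbf{DC}$. Let $\mathcal{P}_n$ be the set of all U-families $\{\langle U_s,F_s\rangle:s\in S_n\}$ of length $n+1$ for $\langle A,B\rangle$ whose witnessing bijection $n+1\to S_n$ is the increasing enumeration of $S_n$, so that the chain is ordered by the usual order of the rationals. By U-normality each $\mathcal{P}_n$ is non-empty: transport a family witnessing Definition \ref{s4d1} (indexed by $n+1$) to $S_n$ along the order isomorphism. On $\mathcal{P}=\bigcup_n\mathcal{P}_n$ let $\mathcal{U}\mathrel{R}\mathcal{V}$ mean that $\mathcal{U}\in\mathcal{P}_n$, $\mathcal{V}\in\mathcal{P}_{n+1}$ for some $n$, and $\mathcal{V}$ extends $\mathcal{U}$, i.e.\ the members of $\mathcal{V}$ indexed by $S_n$ agree with those of $\mathcal{U}$.

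The core step is that $R$ has no dead ends. Given $\mathcal{U}=\{\langle U_s,F_s\rangle:s\in S_n\}\in\mathcal{P}_n$, put $q=\psi(n+1)$ and locate $q$ among the members of $S_n$. If $q$ lies between consecutive members $s<s'$ of $S_n$, apply the ``$0\in i\in n$'' clause of Definition \ref{s4d1}(iii) at the index of $s$ to obtain a $\mu$-open $U$ and a $\mu$-closed $F$ with $F_s\subseteq U\subseteq F\subseteq U_{s'}$ and the ``moreover'' property of (iii); if $q<\min S_n$ or $q>\max S_n$, use the $i=0$ or the $i=n$ clause of (iii) respectively. Set $U_q=U$, $F_q=F$ and $\mathcal{V}=\mathcal{U}\cup\{\langle U_q,F_q\rangle\}$. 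Then $\mathcal{V}$ satisfies conditions (i) and (ii) of Definition \ref{s4d1} over $S_{n+1}$: the new chain inclusions hold by construction; $A\subseteq U_q$ and $F_q\subseteq X\setminus B$ follow at an interior gap from $A\subseteq U_s\subseteq F_s\subseteq U$ and $F\subseteq U_{s'}\subseteq X\setminus B$, and at the extreme gaps from what the $i=0$ resp.\ $i=n$ clause of (iii) provides; and the new ``differences are $\mu$-open'' requirements come from the ``moreover'' clause.

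The step I expect to be the main obstacle is verifying that the enlarged family $\mathcal{V}$ again satisfies Definition \ref{s4d1}(iii), i.e.\ that it is a genuine U-family, so that $\mathcal{V}\in\mathcal{P}_{n+1}$ and the recursion can be continued. Concretely, inserting the single pair $\langle U_q,F_q\rangle$ must not spoil refinability at the untouched gaps of $S_n$, and must create refinability at the two new gaps flanking $\langle U_q,F_q\rangle$; the latter is essentially (iii) for $\mathcal{U}$ at the neighbouring indices again, while the former forces one to choose, already when inserting, the closed ``buffer'' sets used in (iii) with enough care that they interact correctly with the sets that will be used later. Granting this, $\mathbf{DC}$ applied to $R$ on the non-empty set $\mathcal{P}$ yields a sequence $(\mathcal{U}^{(n)})_{n\in\omega}$ with $\mathcal{U}^{(n)}\in\mathcal{P}_n$ and $\mathcal{U}^{(n)}\mathrel{R}\mathcal{U}^{(n+1)}$ for all $n$, and by coherence $\{\langle U_r,F_r\rangle:r\in\mathbb{Q}\cap(0,1)\}:=\bigcup_n\mathcal{U}^{(n)}$ is well defined. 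For $s<r$ in $\mathbb{Q}\cap(0,1)$ pick $n$ with $s,r\in S_n$; conditions (i) and (ii) of Definition \ref{s4d1} for $\mathcal{U}^{(n)}$ give $F_s\subseteq U_r$ and $U_r\setminus F_s\in\mu$, and $A\subseteq U_r\subseteq F_r$ with $U_r$ $\mu$-open, $F_r$ $\mu$-closed and $F_r\cap B=\emptyset$ hold at every stage and hence for the union. Thus $\{\langle U_r,F_r\rangle\}$ satisfies Proposition \ref{s4p1}(b), so by Proposition \ref{s4p1} there is a $\langle\mu,\tau_n\rangle$-continuous $f:X\to\mathbb{R}$ with $A\subseteq f^{-1}[\{0\}]$ and $B\subseteq f^{-1}[\{1\}]$. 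Therefore $\mathbf{UL(X)}$ holds.
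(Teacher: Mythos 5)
Your proposal is correct and follows essentially the same route as the paper: build the increasing tree of finite U-families indexed by initial segments of an enumeration of $\mathbb{Q}\cap(0,1)$, apply $\mathbf{DC}$ to the extension relation, take the union of the resulting branch, and conclude via Proposition \ref{s4p1}. The one step you flag as the main obstacle --- that the family enlarged by inserting the pair supplied by Definition \ref{s4d1}(iii) is again a U-family, so that the relation has no dead ends --- is exactly the assertion the paper itself attributes to U-normality without further argument, so your treatment is no less complete than the paper's own.
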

\begin{proof}
Let $\mathbf{X}=\langle X, \mu\rangle$ be a U-normal GT space. Suppose that $A,B$ is a pair of disjoint non-empty $\mu$-closed sets. Let $\psi:\omega\to\mathbb{Q}\cap (0,1)$ be a bijection.  For every $n\in\omega$, let $\mathcal{U}_n(\langle A, B\rangle)$ be a collection of all U-families $\{\langle U_{\psi{i}}, F_{\psi(i)}\rangle: i\in n+1\}$  of length $n+1$ for $\langle A, B\rangle$. Let $\mathcal{U}(\langle A, B\rangle)=\bigcup\limits_{n\in\omega}\mathcal{U}_n(\langle A, B\rangle)$. We define a binary relation $R$ on $\mathcal{U}(\langle A,B\rangle)$ as follows. If $\mathcal{U},\mathcal{V}\in\mathcal{U}(\langle A,B\rangle$, then $\langle \mathcal{U},\mathcal{V}\rangle\in R$ if and only if $\mathcal{V}$ is an extension of $\mathcal{U}$. It follows from the U-normality of $\mathbf{X}$ that, for every $n\in\omega$, $\mathcal{U}_n(\langle A,B\rangle)\neq\emptyset$ and, for every $\mathcal{U}\in\mathcal{U}(\langle A,B\rangle)$, there exists $\mathcal{V}\in\mathcal{U}(\langle A,B\rangle)$ such that $\langle\mathcal{U}, \mathcal{V}\rangle\in R$. Assuming $\mathbf{DC}$, we can fix a function $H\in\mathcal{U}(\langle A, B\rangle)^{\omega}$ such that, for every $n\in\omega$, $\langle H(n), H(n+1)\rangle\in R$. By using $H$, one can easily define a family $\{\langle U_r, F_r\rangle: r\in\mathbb{Q}\cap(0,1)\}$ satisfying conditions (i)-(iii) of Proposition \ref{s4p1}(b). This, together with Proposition \ref{s4p1}, completes the proof.
\end{proof}

\section{On the topology generated by a generalized topology}
\label{s5}

For a generalized topology $\mu$ in a set $X$, let $\tau(\mu)$ be the topology in $X$ generated by $\mu$. If $\mu$ is strong, then 
$$\tau(\mu)=\{ V\subseteq X: (\forall x\in V)(\exists \mathcal{U}\in [\mu]^{<\omega})( x\in\bigcap\mathcal{U}\subseteq V)\}.$$

The main aim of this section is to show in $\mathbf{ZF}$ an example of a Hausdorff strong GT space $\mathbf{X}=\langle X,\mu\rangle$ such that $\mathbf{GUL(X)}$ is true but $\mathbf{UL}(\langle X,\tau(\mu)\rangle)$ is false. The following theorem shows that, in such an example, $\tau(\mu)$ cannot be compact.

\begin{theorem}
\label{s5t1}
$[\mathbf{ZF}]$ Let $\mathbf{X}=\langle X, \mu\rangle$ be a  strong GT space such that $\mathbf{GUL(X)}$ is true and $\tau(\mu)$ is compact. Then $\mathbf{UL}(\langle X, \tau(\mu)\rangle)$ is true. 
\end{theorem}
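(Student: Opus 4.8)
I will prove the contrapositive-free version directly: given an arbitrary (fixed) pair $A,B$ of disjoint $\tau(\mu)$-closed subsets of $X$, I will exhibit a $\langle\tau(\mu),\tau_n\rangle$-continuous $f\colon X\to[0,1]\subseteq\mathbb{R}$ with $A\subseteq f^{-1}[\{0\}]$ and $B\subseteq f^{-1}[\{1\}]$. Since this is an existence claim about one pair, only \emph{finitely many} instances of the hypothesis will be invoked, so no choice beyond $\mathbf{ZF}$ is used. Two elementary preliminaries: (a) because $\tau_n$ is the topology generated by $g\tau_n$ and taking preimages commutes with unions and finite intersections, every $\langle\mu,g\tau_n\rangle$-continuous map is $\langle\tau(\mu),\tau_n\rangle$-continuous; composing with a retraction of $\mathbb{R}$ onto $[0,1]$, $\mathbf{GUL(X)}$ therefore yields, for any pair of disjoint $\mu$-closed sets $P,Q$, a $\langle\tau(\mu),\tau_n\rangle$-continuous $g\colon X\to[0,1]$ with $P\subseteq g^{-1}[\{0\}]$ and $Q\subseteq g^{-1}[\{1\}]$; (b) the $\mu$-closed sets are closed under arbitrary intersections (De Morgan plus stability of $\mu$ under unions), so by the finite distributive law the complement of any finite union of finite intersections of $\mu$-open sets is a finite union of $\mu$-closed sets.

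\textbf{The compactness step (the crux).} Since $A\cap B=\emptyset$, the two $\tau(\mu)$-open sets $X\setminus A$ and $X\setminus B$ cover $X$. Writing each as a union of basic $\tau(\mu)$-open sets (finite intersections of members of $\mu$) and using compactness of $\tau(\mu)$, I obtain finitely many basic open sets covering $X$, each contained in $X\setminus A$ or in $X\setminus B$. Let $W_A$ be the union of those contained in $X\setminus B$ and $W_B$ the union of those contained in $X\setminus A$ (putting any set contained in both into $W_A$). Then $W_A,W_B$ are finite unions of basic open sets, $W_A\cup W_B=X$, $W_A\cap B=\emptyset$, and $W_B\cap A=\emptyset$; since $A\subseteq W_A\cup W_B$ and $A\cap W_B=\emptyset$ we get $A\subseteq W_A$, and likewise $B\subseteq W_B$. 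Put $\mathcal{A}:=X\setminus W_B$ and $\mathcal{B}:=X\setminus W_A$. By preliminary (b), $\mathcal{A}$ and $\mathcal{B}$ are finite unions of $\mu$-closed sets; moreover $A\subseteq\mathcal{A}$, $B\subseteq\mathcal{B}$, and, crucially, $\mathcal{A}\cap\mathcal{B}=X\setminus(W_A\cup W_B)=\emptyset$.

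\textbf{Assembling the function.} Fix decompositions $\mathcal{A}=Z^A_1\cup\dots\cup Z^A_p$ and $\mathcal{B}=Z^B_1\cup\dots\cup Z^B_q$ into $\mu$-closed sets. Since $\mathcal{A}\cap\mathcal{B}=\emptyset$, each $\langle Z^A_s,Z^B_m\rangle$ is a pair of disjoint $\mu$-closed sets, so by preliminary (a) and the fact that a choice function exists over a finite family of nonempty sets in $\mathbf{ZF}$, I may fix $\langle\tau(\mu),\tau_n\rangle$-continuous maps $g_{s,m}\colon X\to[0,1]$ with $Z^A_s\subseteq g_{s,m}^{-1}[\{0\}]$ and $Z^B_m\subseteq g_{s,m}^{-1}[\{1\}]$. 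Set $f:=\min_{1\leq s\leq p}\max_{1\leq m\leq q}g_{s,m}$. A finite min/max of $\langle\tau(\mu),\tau_n\rangle$-continuous real-valued functions is $\langle\tau(\mu),\tau_n\rangle$-continuous (preimages of intervals are built from the sets $\{g_{s,m}<t\}$ and $\{g_{s,m}>t\}$ by finite unions and intersections, which stay in the topology $\tau(\mu)$). If $a\in A\subseteq\mathcal{A}$, then $a\in Z^A_{s_0}$ for some $s_0$, so $\max_m g_{s_0,m}(a)=0$ and hence $f(a)=0$; if $b\in B\subseteq\mathcal{B}$, then $b\in Z^B_{m_0}$ for some $m_0$, so $\max_m g_{s,m}(b)=1$ for every $s$ and hence $f(b)=1$. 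Thus $f$ witnesses $\mathbf{UL}(\langle X,\tau(\mu)\rangle)$ for the pair $A,B$; if $A$ or $B$ is empty, a constant function does the job.

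\textbf{Where the difficulty lies.} Essentially everything is routine except the second step: one needs fattenings of $A$ and of $B$ to sets that are simultaneously $\mu$-closed-in-a-finite-union sense \emph{and} disjoint, and a naive argument (taking separate open neighbourhoods of $A$ and $B$, whose complements need not be disjoint) fails. The device that makes it work, and that genuinely uses compactness of $\tau(\mu)$, is to refine the single cover $\{X\setminus A,\ X\setminus B\}$ of $X$, so that the two pieces of a finite refinement have union exactly $X$ and hence complementary-disjoint complements. It should also be stressed that mere normality of $\mathbf{X}$ is not enough at the last step: it is the strength of $\mathbf{GUL(X)}$ — the availability of separating \emph{functions}, not just separating open sets, on the finitely many $\mu$-closed pairs — that permits forming $f$ without $\mathbf{DC}$.
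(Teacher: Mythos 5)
Your proof is correct and follows essentially the same route as the paper's: use compactness to enlarge $A$ and $B$ to disjoint sets, each a finite union of $\mu$-closed sets, and then apply $\mathbf{GUL(X)}$ to the finitely many resulting pairs of disjoint $\mu$-closed sets. The only differences are cosmetic --- you run the compactness argument on the open cover $\{X\setminus A,\ X\setminus B\}$ where the paper applies the finite intersection property to $A$ and $B$ written as intersections of finite unions of $\mu$-closed sets, and your explicit $\min/\max$ assembly spells out a step the paper merely asserts (a finite union of $\mu$-closed sets need not be $\mu$-closed, so $\mathbf{GUL(X)}$ cannot be applied to $\mathcal{A}$ and $\mathcal{B}$ directly).
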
 
\begin{proof}
Consider any pair $A_0, A_1$ of disjoint non-empty $\tau(\mu)$-closed sets. If $i\in\{0,1\}$, there exist a non-empty set $S_i$ and a family $\{\mathcal{F}_{i,s}: s\in S_i\}$ such that, for every $s\in S_i$, $\mathcal{F}_{i,s}$ is a finite family of $\mu$-closed sets and $A_i=\bigcap\limits_{s\in S_i}(\bigcup\mathcal{F}_{i,s})$. Since $A_0\cap A_1=\emptyset$ and the sets $A_0, A_1$ are both $\tau(\mu)$-compact, for every $i\in\{0,1\}$, there exists a finite subset $K_i$ of $S_i$ such that, for  $B_i=\bigcap\limits_{s\in K_i}(\bigcup\mathcal{F}_{i,s})$, the sets $B_0, B_1$ are disjoint. Now, it is easily seen that, for every $i\in\{0,1\}$, we can choose a finite family $\mathcal{C}_i$ of $\mu$-closed sets such that $B_i=\bigcup\mathcal{C}_i$. Since every $\langle \mu,g\tau_n\rangle$-continuous function is $\langle \tau(\mu), \tau_n\rangle$-continuous, it follows from $\mathbf{GUL(X)}$ that there exists a $\langle \tau(\mu),\tau_n\rangle$-continuous function $f:X\to [0,1]$ such that, for every $i\in\{0,1\}$,  $\bigcup\mathcal{C}_i\in f^{-1}[\{i\}]$. This shows that $\mathbf{UL}(\langle X, \tau(\mu)\rangle)$ is true.
\end{proof} 

Let us establish the following general fact:

\begin{theorem}
\label{s5t2}
$[\mathbf{ZF}]$ For $i\in\{1,2\}$, let $\mathbf{X}_i=\langle X_i, \mu_i\rangle$ be a strong GT space. Let $X=X_1\times X_2$ and 
$$\mu=\{(U\times X_2)\cup(X_1\times V): U\in\mu_1\text{ and } V\in\mu_2\}.$$ 
Then $\mathbf{X}=\langle X, \mu\rangle$ is a strong GT space. Moreover, the following conditions are satisfied:
\begin{enumerate}
\item[(i)] if, for each $i\in\{1,2\}$, $\mathbf{GUL}(\mathbf{X}_i)$ holds, then $\mathbf{GUL}(\mathbf{X})$ holds;
\item[(ii)] if, for each $i\in\{1,2\}$, $\mathbf{UL}(\mathbf{X}_i)$ holds, then $\mathbf{UL}(\mathbf{X})$ holds;
\item[(iii)] if, for each $i\in\{1,2\}$, $\mathbf{X}_i$ is effectively normal, so is $\mathbf{X}$.
\end{enumerate}
\end{theorem}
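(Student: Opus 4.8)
The first task is to check that $\mu$ is a strong generalized topology on $X = X_1 \times X_2$. Stability under arbitrary unions follows because a union of sets of the form $(U_\alpha \times X_2) \cup (X_1 \times V_\alpha)$ equals $\bigl((\bigcup_\alpha U_\alpha) \times X_2\bigr) \cup \bigl(X_1 \times (\bigcup_\alpha V_\alpha)\bigr)$, and $\mu_1, \mu_2$ are GTs; that $X \in \mu$ is witnessed by $U = X_1$ (or $V = X_2$), and $\emptyset \in \mu$ via $U = \emptyset$, $V = \emptyset$. The next preparatory observation, which will be reused throughout, is an explicit description of the $\mu$-closed sets: $X \setminus \bigl((U \times X_2) \cup (X_1 \times V)\bigr) = (X_1 \setminus U) \times (X_2 \setminus V)$, so the $\mu$-closed sets are exactly the ``rectangles'' $C_1 \times C_2$ with $C_i$ being $\mu_i$-closed in $X_i$. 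I would also record that the projections $p_i : X \to X_i$ are $\langle \mu, \mu_i\rangle$-continuous (since $p_1^{-1}[U] = U \times X_2 \in \mu$), hence if $f_i : X_i \to \mathbb{R}$ is $\langle \mu_i, \tau_n\rangle$-continuous then $f_i \circ p_i$ is $\langle \mu, \tau_n\rangle$-continuous, and similarly for $g\tau_n$.

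For (ii): given disjoint $\mu$-closed sets $A, B \subseteq X$, I would \emph{not} try to separate them directly; instead use the rectangle structure. If $A = C_1 \times C_2$ and $B = D_1 \times D_2$ are disjoint, then $C_1 \cap D_1 = \emptyset$ or $C_2 \cap D_2 = \emptyset$; say $C_1 \cap D_1 = \emptyset$. These are disjoint $\mu_1$-closed subsets of $X_1$, so $\mathbf{UL}(\mathbf{X}_1)$ provides a $\langle \mu_1, \tau_n\rangle$-continuous $g : X_1 \to \mathbb{R}$ with $C_1 \subseteq g^{-1}[\{0\}]$, $D_1 \subseteq g^{-1}[\{1\}]$; then $f = g \circ p_1$ works since $A \subseteq C_1 \times X_2 \subseteq (g\circ p_1)^{-1}[\{0\}]$ and likewise for $B$. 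One subtlety to address: a general $\mu$-closed set need not be a single rectangle, but a \emph{union} of rectangles — however, for the statements $\mathbf{UL}$, $\mathbf{GUL}$ we only need to separate pairs where each is $\mu$-closed, and one must be careful that an arbitrary $\mu$-closed set, being an intersection of the rectangle-complements described above, is itself a single rectangle $C_1 \times C_2$ where $C_i = \bigcap$ of the relevant $\mu_i$-closed sets (since $(\bigcap_\alpha E_\alpha) \times (\bigcap_\alpha G_\alpha) \ne \bigcap_\alpha (E_\alpha \times G_\alpha)$ in general — so I should instead argue that the collection of $\mu$-closed sets is closed under arbitrary intersections and that each basic $\mu$-closed set $(X_1\setminus U)\times(X_2\setminus V)$ is a rectangle, then verify that an arbitrary intersection of rectangles with both factors nonempty is again a rectangle, handling the empty case separately). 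This bookkeeping about which sets are $\mu$-closed is where I expect the only real friction; once the ``every $\mu$-closed set is a rectangle'' lemma is nailed down in $\mathbf{ZF}$, the rest is mechanical. Statement (i) is identical with $g\tau_n$ in place of $\tau_n$ and $\mathbf{GUL}$ in place of $\mathbf{UL}$.

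For (iii): given effective normality functions $F_i : \mathcal{E}(\mathbf{X}_i) \to \mathcal{O}(\mathbf{X}_i)$, I would build $F : \mathcal{E}(\mathbf{X}) \to \mathcal{O}(\mathbf{X})$ uniformly. Given $\langle A, B\rangle \in \mathcal{E}(\mathbf{X})$ with $A = C_1 \times C_2$, $B = D_1 \times D_2$ disjoint, I need a canonical choice of which coordinate separates: use a fixed rule, e.g. take $i = 1$ if $C_1 \cap D_1 = \emptyset$, otherwise $i = 2$ (this choice is definable without $\mathbf{AC}$). Apply $F_i$ to $\langle C_i, D_i\rangle$ to get $\langle U, V\rangle \in \mathcal{O}(\mathbf{X}_i)$ with $C_i \subseteq U$, $D_i \subseteq V$, $U \cap V = \emptyset$; then set $F(\langle A, B\rangle) = \langle p_i^{-1}[U], p_i^{-1}[V]\rangle$, which are disjoint $\mu$-open sets containing $A$, $B$ respectively. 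I should double-check the degenerate cases where $A$ or $B$ is empty (then one factor is empty, and I can return $\langle \emptyset, X\rangle$ or $\langle X, \emptyset\rangle$ as in Proposition~\ref{s3p5}) so that $F$ is genuinely total on $\mathcal{E}(\mathbf{X})$, and confirm that the whole construction is a single $\mathbf{ZF}$-definable function of the input data $F_1, F_2$. Note (iii) also re-proves (i) via Theorem~\ref{s3t4}, but the direct argument for (i) is shorter so I would keep both.
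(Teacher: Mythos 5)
Your proposal is correct and follows essentially the same route as the paper: identify the $\mu$-closed sets as rectangles $C_1\times C_2$ with $C_i$ $\mu_i$-closed, note that disjointness forces $C_1\cap D_1=\emptyset$ or $C_2\cap D_2=\emptyset$, separate in that coordinate and pull back along the projection, and for (iii) combine $F_1,F_2$ with a definable choice of coordinate (the paper writes $U\times X_2$ where you write $p_1^{-1}[U]$, which is the same set). The only remark worth making is that your worried digression about $\mu$-closed sets being unions or intersections of rectangles is unnecessary: $\mu$ is \emph{defined} as the displayed family (not generated by it as a subbase), so every $\mu$-closed set is the complement of a single member of $\mu$ and hence exactly one rectangle, as your first paragraph already computes.
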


\begin{proof} It is obvious that $\mu$ is a strong generalized topology in $X$.  To prove (i)--(iii), let us assume that $A, B$ is a pair of disjoint $\mu$-closed sets. Then  there exist $\mu_1$-closed sets $A_1,B_1$ and $\mu_2$-closed sets $A_2,B_2$ such that $A=A_1\times A_2$ and  $B=B_1\times B_2$. Without loss of generality, we may assume that $A_1\cap B_1=\emptyset$.

To prove (i), assuming that, for each $i\in\{1,2\}$,  $\mathbf{GUL}(\mathbf{X}_i)$ holds, we can take a $\langle\mu_1, g\tau_n\rangle$-continuous function $f: X_1\to\mathbb{R}$ such that $A_1\subseteq f^{-1}[\{0\}]$ and $B_1\subseteq f^{-1}[\{1\}]$. We define a function $g:X\to\mathbb{R}$ by putting $g(x,y)=f(x)$ for every point $\langle x,y\rangle\in X$. The function $g$ is $\langle \mu, g\tau_n\rangle$-continuous, $A\subseteq g^{-1}[\{0\}]$ and $B\subseteq g^{-1}[\{1\}]$. Hence $\mathbf{GUL}(\mathbf{X})$ holds. The proof to (ii) is similar.

To prove (iii), we assume that the spaces $\mathbf{X}_i$ are effectively normal. For each $i\in\{1,2\}$, choose a function $F_i:\mathcal{E}(\mathbf{X}_i)\to\mathcal{O}(\mathbf{X}_i)$ such that, for every $\langle C, D\rangle\in\mathcal{E}(\mathbf{X}_i)$, if $F_i(\langle C, D\rangle)=\langle U, V\rangle$, then $C\subseteq U$ and $D\subseteq V$. Now, we define a function $F:\mathcal{E}(\mathbf{X})\to\mathcal{O}(\mathbf{X})$ as follows. If $A_1\cap B_1=\emptyset$ and $F_1(\langle A_1, B_1\rangle)=\langle U, V\rangle$, we define $F(\langle A, B\rangle)=\langle U\times X_2, V\times X_2\rangle$. If $A_2\cap B_2=\emptyset$ and $F_2(\langle A_2, B_2\rangle)=\langle U, V\rangle$, we define $F(\langle A, B\rangle)=\langle X_1\times U, X_1\times V\rangle$. This shows that $\mathbf{X}$ is effectively normal.
\end{proof}

\begin{example}
\label{s5e1}
$[\mathbf{ZF}]$ Let us consider the following generalized topology $g\tau_s$ in $\mathbb{R}$:
$$g\tau_s=g\tau_n\cup\{[a, +\infty): a\in\mathbb{R}\}\cup\{(-\infty, a)\cup [b, +\infty): a,b\in\mathbb{R}\text{ and } a<b\}.$$
Then $\tau(g\tau_s)$ is the topology of the Sorgenfrey line. Let $X=\mathbb{R}\times\mathbb{R}$ and 
$$\mu=\{(U\times\mathbb{R})\cup(\mathbb{R}\times V): U,V\in g\tau_s\}.$$
Then $\mu$ is a Hausdorff strong generalized topology in $X$.  To check that the GT space $\langle \mathbb{R}, g\tau_s\rangle$ is effectively normal, for every pair $A,B$ of disjoint $g\tau_s$-closed sets such that either $A$ is $g\tau_s$-open or $B$ is $g\tau_s$-open, we put
$$
F(\langle A, B\rangle)=\begin{cases} \langle A, \mathbb{R}\setminus A\rangle &\text{if } A \text{ is } g\tau_s\text{-open};\\
\langle \mathbb{R}\setminus B, B\rangle &\text{otherwise}.
\end{cases}
$$
If $A,B$ is a pair of disjoint $g\tau_s$-closed sets such that neither $A$ nor $B$ is $g\tau_s$-open, we can define $F(\langle A, B\rangle)$ in much the same way, as in the proof to Proposition \ref{s3p5}. In this way, we define a mapping $F:\mathcal{E}(\langle \mathbb{R}, g\tau_s\rangle)\to\mathcal{O}(\langle \mathbb{R}, g\tau_s\rangle)$ witnessing that $\langle \mathbb{R}, g\tau_s\rangle$ is effectively normal.  It follows from Theorem \ref{s3t4} that  $\mathbf{GUL}(\langle \mathbb{R}, g\tau_s\rangle)$ is true. One can also check that $\mathbf{GUL}(\langle \mathbb{R}, g\tau_s\rangle)$ holds by a slight modification of the proof to Proposition \ref{s2p3}. It follows from Theorem \ref{s5t2} that $\mathbf{GUL}(\langle X, \mu\rangle)$ is true. That $\mathbf{UL}(\langle X, \tau(\mu)\rangle)$ is not true follows from the well-known fact that the square of the Sorgenfrey line is not normal in $\mathbf{ZF}$. 
\end{example}

\end{document}